\def\eps{\varepsilon}
\newtheorem{rem}{Remark}
\DeclareMathOperator{\sgn}{sgn}
\begin{document}
\title{Canards, folded nodes and mixed-mode oscillations in piecewise-linear slow-fast systems}
\author{Mathieu Desroches\footnotemark[1] \footnotemark[6] 
\and Antoni Guillamon\footnotemark[2] 
\and Enrique Ponce\footnotemark[3] 
\and Rafel Prohens\footnotemark[4]
\and Serafim Rodrigues\footnotemark[5] 
\and Antonio E. Teruel\footnotemark[4]} 
\date{}
\maketitle
\renewcommand{\thefootnote}{\fnsymbol{footnote}}
\setcounter{footnote}{1}
\footnotetext[1]{Inria Sophia Antipolis Research Centre, Valbonne, France. \thanks{MD acknowledges the support of the University of the Balearic Islands through a research scholarship.}}
\footnotetext[2]{Department of Applied Mathematics, Polytechnic University of Catalunya, Barcelona, Spain.}
\footnotetext[3]{Department of Applied Mathematics, University of Sevilla, Sevilla, Spain. \thanks{EP is partially supported by by the Spanish Ministerio de Ciencia y Tecnolog{\'i}a in the frame of project number MTM2012-31821, and by the Consejer{\'i}a de Econom{\'i}a-Innovaci{\'o}n-Ciencia-Empleo de la Junta de Andaluc{\'i}a under grant P12-FQM-1658.}}
\footnotetext[4]{Department of Applied Mathematics, University of the Balearic Islands, Palma de Mallorca, Spain. \thanks{RP and AET are supported by a MCYT/FEDER grant number MTM2011-22751 and MTM2014-54275-P}. \newline The authors thank Martin Krupa for useful discussions.}
\footnotetext[5]{School of Computing and Mathematics, Plymouth University, Drake Circus, Plymouth PL4 8AA, UK.}
\footnotetext[6]{{\tt mathieu.desroches@inria.fr}}
\renewcommand{\thefootnote}{\arabic{footnote}}

\begin{abstract}
Canard-induced phenomena have been extensively studied in the last three decades, both from the mathematical and from the application viewpoints. Canards in slow-fast systems with (at least) two slow variables, especially near folded-node singularities, give an essential generating mechanism for Mixed-Mode oscillations (MMOs) in the framework of smooth multiple timescale systems. There is a wealth of literature on such slow-fast dynamical systems and many models displaying canard-induced MMOs, in particular in neuroscience. In parallel, since the late 1990s several papers have shown that the canard phenomenon can be faithfully reproduced with piecewise-linear (PWL) systems in two dimensions although very few results are available in the three-dimensional case. The present paper aims to bridge this gap by analysing canonical PWL systems that display folded singularities, primary and secondary canards, with a similar control of the maximal winding number as in the smooth case. We also show that the singular 
phase portraits are compatible in both frameworks. Finally, we show on an example how to construct a (linear) global return and obtain robust PWL MMOs.
\end{abstract}

\section{Introduction}
\label{intro}

\subsection{Motivations}
One of the most interesting dynamical behaviours exhibited by multiple timescale systems is the existence of trajectories which, after flowing close to an attracting slow manifold, remain close to a repelling slow manifold during a significant amount of time; see~\cite{F79} for the definition and properties of slow manifolds. These trajectories are usually called \textit{canards}~\cite{BCDD81}. When such trajectories stay close to a repelling manifold for as long as this manifold exists, then one speaks of {\it maximal canards}.\newline
In planar slow-fast systems, classical canards arise as limit cycles whose amplitude and period grow in an explosive manner as a parameter is varied~\cite{DR96,KS01b}. A consequence of this property is that the canard phenomenon is short-lived~\cite{diener84} in the planar case. More precisely, canards exist in parameter intervals that are exponentially small in the timescale (small) parameter $\eps$. Furthermore, only one maximal canard can exist, corresponding to the connection between a one-dimensional attracting slow manifold and a one-dimensional repelling slow manifold~\cite{DR96,KS01b}. This fact makes canards delicate to compute numerically~\cite{guckenheimer00} and difficult to relate to experimental observations. Note that canard-type behaviour has been observed experimentally in controlled electronic circuits~\cite{itoh91,tomiyasu90a,tomiyasu90b}. \newline Canard solutions become much more ``robust'' in three-dimensional systems with two slow variables. Indeed, they 
exist for a range of parameters which is not exponentially small in $\varepsilon$. Furthermore, multiple maximal canards can exist along the transverse intersection curves of two-dimensional attracting and repelling slow manifolds. Besides, in this context canards can be responsible for the existence of Mixed-Mode Oscillations (MMOs), that is, trajectories with alternating Small-Amplitude Oscillations (SAOs) and Large-Amplitude Oscillations (LAOs). The SAOs of canard-induced MMOs are controlled by special points called {\it folded nodes}~\cite{guckenheimer05,W05}. This topic has a great relevance in applications, in particular in the context of chemical reactions~\cite{guckenheimer13,milik}, the modelling of cellular electrical and secretory activity~\cite{Ermentrout09,harvey11,krupa08,RW08}, bistable perception \cite{curtu10},
optical oscillations~\cite{marino11}, among other nonlinear phenomena.

There are different ways to obtain MMOs, for instance related to the Shilnikov homoclinic bifurcation~\cite{koper95} and to the break-up of an invariant torus~\cite{larter91}. However, we choose to focus on multiple-timescale  canard-induced MMOs since this topic has received a lot of attention in the recent years. In the context of multiple timescale systems, MMOs are an example of what we will call ``complex oscillations'', which refer to oscillatory solutions with both fast and slow components, different amplitudes and different frequencies. Roughly speaking, there are two main families of complex oscillations: MMOs and bursting oscillations~\cite{desroches13,DK04}. In this paper we exclusively focus on the former ones in PWL systems since the very notion of MMO has not been studied in that framework. Bursting oscillations in PWL systems also constitute an interesting topic that can be related to canards, but we do not address this question in the present work.

PWL systems are known to reproduce nonlinear behaviour (limit cycles, bifurcations, both local and global phenomena) within a mathematical framework that allows for both qualitative analysis and quantitative characterisation of a given system (parameter and period estimation), where one can gain more control on the system without losing any interesting (nonlinear) dynamics. Quantitative information via a PWL approach has been recently used in~\cite{fernandez15,FG15} in the context of hormone secretion, and in~\cite{GPTV15} in the context of the estimation of synaptic conductances. Canard dynamics has been investigated in planar PWL slow-fast systems, from the 1990s~\cite{ARIMA97,KS91} up to very recently~\cite{DFHPT13, FG14, RCG12, SK11}; see Section~\ref{pwl2d}. It is then natural to try to reproduce canard-induced MMO behaviour in three-dimensional PWL slow-fast systems and investigate the equivalent of maximal canards (primary, secondary) and folded nodes (among other {\it folded singularities}). A first step in 
that direction was made in~\cite{PT13}, without a link to folded singularities. 

This work is part of a larger research program aimed at building up a theory for slow-fast dynamics by using PWL systems, and then deriving simplified models that are meaningful for neuroscience applications. In the first part, the goal is both to reproduce complicated smooth slow-fast dynamics in the framework of PWL vector fields and to propose a simplified interpretation of these phenomena. In particular, the concept of slow manifold is defined in a more natural way by using PWL systems; also, the behaviour of canard solutions can be seen more clearly when observing it through a PWL prism. In our opinion, PWL systems offer the optimal framework to keep the essential elements for the rich nonlinear dynamics to emerge while allowing for simplified interpretations. On the application side, the goal is to develop a PWL version of conductance-based models. Indeed, the voltage equation is the result of Kirchoff's law and can be formulated in terms of PWL functions; the equation for the gating variables use sigmoid functions and can also be replaced by PWL functions. This approach has been taken in~\cite{D97,DR03} but only in the planar context. Our objective is to extend it to three-dimensional canard-induced MMO systems (as those in e.g.~\cite{krupa08,RW08}) using the theory developed in the present work. The overarching goal will be to capitalise on the ability of PWL systems to give access to quantitative informations (in terms of parameters, frequency properties, etc..) while keeping the biophysical interpretation of the variables. This research program includes recent and on-going work by the authors and their collaborators~\cite{DFHPT13,FG14,PT13,PTV14}. 

We believe that the results presented in this work constitute an important step in the realisation of the above research program, as it opens the way for a more thorough analysis of 
(at least) three-dimensional PWL systems that display canard dynamics and complex oscillations. Note that there is also recent interest in canard phenomena in piecewise-smooth systems, in the context of Filippov vector fields, both in planar and three-dimensional systems; we refer the reader to, e.g., \cite{KH14,RG14}. The approach there is different since instead of approximating folded manifolds of slow motion by PWL objects, one keeps the smoothness of these manifolds but considers them as switching manifolds between different vector fields. We will not focus on this approach in the present work as we believe that PWL systems offer the simplest framework to study these dynamical phenomena.

\subsection{Review of the literature on PWL slow-fast dynamics}
Our present work contains a substantial amount of new results concerning canard dynamics in a three-dimensional PWL setting. The idea of studying canard phenomena by combining PWL dynamics with multiple timescales dates back to the early 1990s and has had three main phases, which we review below.

\subsubsection{Phase-plane analysis}

The first community that seems to have studied canards in PWL systems was from Japan in the 1990s~\cite{ARIMA97,KS91,itoh91,tomiyasu90b,tomiyasu90a}. Whereas a partial analysis of the planar case was performed, hardly anything was done in the three-dimensional one. Furthermore, the emphasis was very much towards applications to electrical circuits, which also justifies the use of PWL systems. There is indeed a long-standing tradition of using this particular mathematical framework to model electrical and electronic circuits, which dates back to the work of Andronov {\it et al.}~\cite{Andronov66}. In particular, the experimental work done within this Japanese group provided the only examples to date of experimental canards. Overall, the main contributions of this group of papers can be summarised as a phase-plane analysis --- that is, describing important dynamical objects for fixed parameter values --- of PWL canard systems in two-dimensional problems. An important finding reported in~\cite{
ARIMA97} is the 
idea that the 
dynamics near the fold of the {\it critical manifold} (fast nullcline) in van der Pol type systems needs a three-piece PWL approximation in order to reproduce canards. A fourth piece is needed in order to have a global return and to obtain canards with head. Therefore, based on the difference between the approach of Komuro and Saito~\cite{KS91}, and that of Arima {\it et al.}~\cite{ARIMA97}, we will refer to PWL systems 
similar to that studied by Komuro and Saito as {\it two-piece local systems} given that their PWL critical manifold has locally two pieces to approximate the quadratic fold of smooth van der Pol type systems. Likewise, we will refer to PWL systems having true canards as {\it three-piece local systems} given that their PWL critical manifold has locally three pieces to approximate the quadratic fold. Note that the two families of systems just defined may have three or four linearity zones, respectively, the ``last'' zone being used for the global dynamics.

The main argument used to justify this three-piece approximation of a quadratic fold was to obtain the correct transition of eigenvalues near the fold: real, complex, and then real again. In the case where the cubic nullcline is approximated near one of its fold points by one corner, that is, in the case of a two-piece local system, the absence of true canards with still the presence of an explosive growth of cycles upon parameter variation was reported in~\cite{KS91} and recently proven rigorously in~\cite{DFHPT13}. 

\subsubsection{Bifurcation analysis}
Instead of considering only snapshots of the parameter space corresponding to different dynamical regimes, recent studies on planar PWL slow-fast systems have focused on understanding the transitions between these dynamical regimes from a bifurcation viewpoint. Namely, identifying bifurcation scenarios and characterising more systematically the behaviour of emerging canard-like cycles. In this second phase, the main articles are~\cite{DFHPT13,llibre02,RCG12,SK11}. The idea in this group of papers is to revisit the work by the Japanese school and try to clarify the notion of canard cycle and canard explosion in planar PWL slow-fast systems. Applications to neuron models were considered~\cite{RCG12}, also in the context of stochasticity, in particular how noise can induce complex-oscillations in planar PWL canard systems~\cite{SK11}. Both two-piece local systems {--- in} \cite{DFHPT13}, referring to the work done in~\cite{KS91} {---} and three-piece local systems {---} in \cite{RCG12}, referring to the work done in~\cite{ARIMA97} {---} were considered in order to approximate van der Pol type systems in the canard regime. Overall, the main contributions of this school is to have defined in a more theoretical framework what conditions are needed to obtain families of limit cycles that grow explosively, and families where true canard explosions occur, in the context of PWL vector fields with multiple timescales.

\subsubsection{Singular perturbation analysis}
More recent work initiated by Prohens and Teruel in~\cite{PT13} tries to extend singular perturbation theory to the PWL framework. The present work naturally belongs to this category, starting from the smooth situation and extending the results to the PWL case. The main objectives of this emerging approach is to revisit Geometric Singular Perturbation Theory (GSPT) using the PWL framework, in dimension greater than two and in the canard regime. In this way, the planar case can be revisited once more~\cite{FG14}, where the existence of canard cycles in the PWL context is obtained using a similar approach as in~\cite{KS01b}. Furthermore, generalisations to higher dimensions are possible. The present paper deals with three-dimensional PWL systems, with the particular goal of finding the equivalent of folded singularities and associated canards in this context. The work 
done in~\cite{PT13, PTV14} permits to obtain Fenichel type results for PWL systems with one fast variable and $n$ slow variables using a matrix formulation. However, even though this configuration reproduces slow-fast behaviour far away from the corner, it does not capture the passage of the flow through the corner curve and, hence, does not allow for complex oscillations in a PWL framework, a situation to be remedied here. Note that Fenichel manifolds have been previously characterised for singularly perturbed linear systems in the context of control theory (see~\cite{KKO}); however, the authors of this work did not address the question of dynamical passage from an attracting to a repelling Fenichel manifold, which is the main novelty of the results obtained in~\cite{PT13,PTV14}.

\subsection{Main results of the paper}
In the planar case, all systems that we will consider can be written in the Li{\'e}nard form
\begin{eqnarray}
\label{lie}
  \begin{array}{l}
    \eps\dot{x} = y - f(x),\\
    \;\;\dot{y} = a - x, 
  \end{array}
\end{eqnarray}
where $f$ is a PWL continuous function that makes system~\eqref{lie} a two-piece local system or a three-piece local system. 
In the three-dimensional case, all systems that we will consider can be written in the form
\begin{eqnarray}
\label{3d}
  \begin{array}{l}
    \eps\dot{x} = - y + f(x),\\
    \;\;\dot{y} =  p_1x + p_2z,\\ 
    \;\;\dot{z} =  p_3,
  \end{array}
\end{eqnarray}
where $p_i$ are parameters and $f$ is as before. The choice of signs in the fast equation of the three-dimensional system is made in order for the resulting PWL system to be written in the exact same form as the minimal smooth systems for folded singularities~\cite{P08}. From the last equation of~\eqref{3d}, it is easy to see that such a system cannot have periodic orbits for $p_3\neq 0$. Our results focus mostly on the local dynamics near folded singularities, but we also show in Section~\ref{pwlmmo} how to construct a global return by adding extra terms to the $\dot{z}$-equation. We next highlight in more detail the main findings of this work.

\subsubsection{Local dynamics}
We construct three-dimensional minimal PWL slow-fast systems (with two slow variables) displaying the same (local) dynamics as the smooth minimal systems for folded singularities. We analyse the singular dynamics in all the cases that lead to persistent canards (that is, we do not consider the folded focus case) and show that the resulting singular phase portraits are compatible with the smooth case. We provide the first interpretation (in terms of GSPT) of the central piece of the critical manifold introduced by Arima {\it et al.} in~\cite{ARIMA97} as a ``blow-up'' of the corner curve (equivalent of the fold curve in the PWL setting). What is more, our analysis shows that the optimal size (in a sense to be made more precise later) of this central part is a function of the small parameter $\eps$, which agrees with the size of the blow-up performed in the analysis of the smooth case. Under 
suitable assumptions, we describe the dynamics for $\eps>0$ in the case of folded node --- which is the most 
important one, from the MMO 
viewpoint --- and folded saddle. We also report findings in the PWL 
context that can help to revisit the smooth case. Indeed, we report the possibility of SAOs near a folded saddle, both in the PWL and in the smooth framework, which seems to have been unnoticed in the smooth case. Furthermore, we show that the so-called {\it weak canard} for $\eps>0$ in the folded node case is not a maximal canard in the PWL context; to the best of our knowledge, this question does not seem to be entirely resolved in the smooth case. Then our result with the PWL framework suggests to revisit the role and properties of the equivalent trajectory in the smooth case.

\subsubsection{Global dynamics}
We propose an example of global return for a PWL slow-fast system with a folded node, so that we can construct robust canard-induced MMOs. This result is of interest in neuroscience as the frequency properties of such PWL MMO neuron models can be quantitatively controlled. In order to exhaust the comparison with the smooth case, in particular with neuronal MMO models, a  more complete analysis of the possible MMO patterns in the PWL context (such as Farey tree structure, devil staircase, or chaotic behaviour) is among the immediate future objectives of our research program.

\subsubsection{Openings towards new directions of research}
Through the present work, we wish to promote two avenues of research. First, we aim to develop an extension of GSPT within the framework of PWL slow-fast systems; this approach has been initiated in~\cite{FG14,PT13,PTV14}. Within this extended theory, the gain is to obtain similar results with a simplified presentation and clarified definitions (in particular regarding the uniqueness of the essential dynamical objects). Besides, we want to particularly focus on complex canard-induced oscillations. Indeed, the present work is mostly concerned with MMOs, but our immediate goal is to revisit canard-induced bursting oscillations in a PWL setting. Second, we provide an important step towards building biophysical PWL models of neurons displaying complex oscillations, canard-induced MMOs and bursting oscillations. These can then be analysed using singular perturbation theory, just like a number of conductance-based smooth models have been analysed in the recent years~\cite{DGKKOW,Ermentrout09,harvey11,krupa08,
rubin02,W05}. The objective is to eventually obtain PWL versions of Hodgkin-Huxley type models that are both biologically meaningful and more amenable to analysis.
As polynomial systems such as the FitzHugh-Nagumo model capture the mechanisms of more complex neuronal models, our results on model (1.2) can also pave the way to understand the generation of MMOs in Hodgkin-Huxley type models~\cite{RW08}.

\subsection{Outline of the paper}
In Section~\ref{pwl2d}, we review planar PWL slow-fast systems, splitting the study into two parts. In the first part, we review two-piece local systems~\cite{KS91}, where an explosive growth of limit cycles can occur even though true canards are not possible (hence termed {\it quasi-canards} in~\cite{DFHPT13}). In the second part, we consider the case of three-piece local systems (\cite{ARIMA97}) where true canard explosions can occur. We present the main previous work on these cases as well as more recent results for PWL slow-fast systems both with three-piece and four-piece critical manifolds, only the latter ones giving rise to canard solutions related to those in smooth systems. In Section~\ref{3dslowpas}, we introduce our strategy to construct canard-type dynamics in three-dimensional PWL slow-fast systems, by putting a slow drift onto the parameter that induces the canard transition obtained in planar two-piece and three-piece local systems. Direct simulation of such systems naturally reveal transitory MMOs that can be interpreted as a quasi-canard explosion 
with a slow drift and as a canard explosion with a slow drift, respectively. The 
latter case is the most interesting one for our purpose because we want to investigate the equivalent of folded singularities and the associated dynamics (in terms of maximal canards) in the PWL context. 
Therefore, in Section~\ref{pwlfsing}, we analyse the local dynamics created by a canard explosion with a slow drift and we find the PWL equivalent of all the key objects that are present in the smooth case. Namely, folded singularities, singular weak canard and singular strong canard for $\eps=0$, and persistent canards for $0<\eps\ll 1$, that is, weak canard $\gamma_w$ and strong canard $\gamma_s$ in the folded-node case, faux canard $\gamma_f$ and strong canard $\gamma_s$ in the folded-saddle case; see Figure~\ref{fcr}. We investigate thoroughly the cases of folded saddle and folded node, and we also provide the singular phase portraits for the two folded saddle-node cases. In Section~\ref{pwlmmo}, we show how to construct a global return near a PWL folded node, so that we can create robust canard-induced MMOs similar to what can be done in the smooth case~\cite{brons06,DGKKOW}. Finally, in the conclusion section, we propose two main perspectives that our work unveils. 
\section{{Planar PWL slow-fast systems}}
\label{pwl2d}

\subsection{Quasi-canards}
\label{quasican}
A large class of neuron models (for action potential generation) are based on the approximation that the membrane of a neuron behaves like an electrical circuit. Therefore, the voltage equation in such models is obtained by applying Kirchoff's law. The other dynamical equations account for the exchange of ions across the neuron membrane as ion channels open and close, which organises the generation of an action potential. After the model by the Nobel prize winners A. Hodgkin and A. Huxley~\cite{HH52} (HH), the first reduction from the original four-dimensional model to a two-dimensional model was made independently by R. FitzHugh~\cite{fitzhugh61} and J. Nagumo~\cite{nagumo62} in the early 1960s. In addition to dimension reduction, the vector field of the HH model was approximated by a polynomial system through the crucial observation that the voltage nullcline is roughly cubic shaped. Hence, the FitzHugh-Nagumo (FHN) model appears as a modified van der Pol system, thus prompt to be investigated from the 
slow-fast perspective. At the beginning of the 1970s, 
McKean~\cite{mckean70} further simplified the FHN model by 
approximating the cubic voltage nullcline by a PWL function. Similar to the HH- and FHN-models, the McKean model comes originally from a reaction-diffusion PDE in order to study the speed of propagation of spiking solutions; then many results on traveling pulses were obtained in this context~\cite{tonnelier03}. McKean also studied the ODE version of his model in~\cite{mckean70}, in the diffusion-free regime. Abbott~\cite{abbott90} extended this work on PWL relaxation oscillations by estimating the period, and coupling two such systems.

After a brief mention of the ``loss'' of canards in PWL systems with two corners in~\cite{itoh91}, the first study of a PWL van der Pol system from the perspective of canards was made by Komuro and Saito in 1991~\cite{KS91}. They performed a phase-plane analysis on what is essentially the ODE McKean model and they showed numerically that the transition from the equilibrium to the relaxation cycle was very rapid in terms of parameter variation and involved small canard-like cycles; see Figure~\ref{ksfig}(a). They also reported the absence of cycles resembling canards with head as well as the fact that the slope of the middle segment constrains how small $\eps$ can be. These findings were recently clarified in~\cite{DFHPT13}. In particular, the authors explained the reason for the absence of canards, namely, the fact that there is no repelling slow manifold in the system. However, the name {\it quasi-canard} was given to the cycles observed by Komuro and Saito, precisely because of their shape and their 
explosive behaviour upon parameter variation. Furthermore, the authors of~\cite{DFHPT13} 
showed that the small parameter $\eps$ is indeed constrained by the value that the slope of the central segment takes, but one can still make $\eps$ as small as possible while suitably adjusting this slope. Also, they analysed the limit of the quasi-canard regime in the parameter plane. In particular, as the equilibrium in the central zone reaches the corner (exactly on the switching line), the system admits a 
continuum of true homoclinic canard connections, which motivated the term {\it super-explosion} in order to 
explain the discontinuous transition from the stationary regime to the relaxation regime; see~\cite{DFHPT13} for detail.
\begin{figure}[!t]
\centering
\includegraphics[width=11cm]{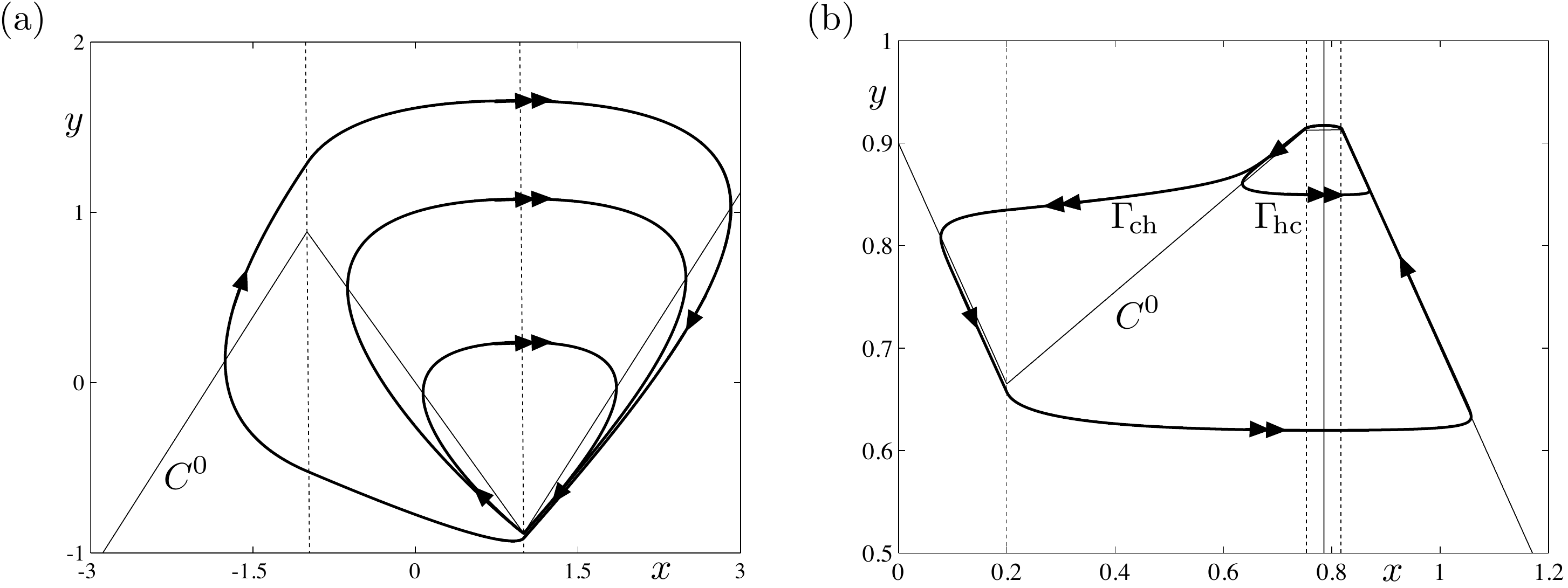}
\caption{\label{ksfig}(a) Quasi-canard solutions of the two-piece local system~\eqref{lie} with a three-piece PWL critical manifold $y=f(x)$ where $f(x)=x+\frac 1 2(1+k)(|x-1|-|x+1|)$ and parameters $\varepsilon=0.2$, $k=0.885$ and $0.5 \leq a <1$. (b)Two canard solutions (a headless canard $\Gamma_{\!\mathrm{hc}}$ and a canard with head $\Gamma_{\!\mathrm{ch}}$) of the three-piece local system from~\cite{ARIMA97} with a four-piece PWL critical manifold. The vector field has been chosen exactly as in~\cite{ARIMA97}, that is, taking the form of~\eqref{pwlfunc4} but with different slopes in different zones. We have also taken the critical manifold to be flat in the central zone. The values of the parameter controlling the slow nullcline (parameter $a$) for these two canard cycles are: $a=0.783913238$ for the headless canard $\Gamma_{\mathrm{hc}}$ and $a=0.783913236$ for the canard with head $\Gamma_{\mathrm{ch}}$. In both panels, $C^0$ denotes the PWL critical manifold.}
\end{figure}

\subsection{Canards}
\label{2dcan}
The main idea to obtain true canard cycles in a planar PWL systems was introduced by Arima {\it et al.} in 1997~\cite{ARIMA97}. It consists in approximating the critical manifold near a fold by a three-piece PWL function. The justification given in~\cite{ARIMA97} is to reproduce the transition between real and complex eigenvalues that occurs near the fold point in the smooth system. The authors of~\cite{ARIMA97} also notice that, due to the presence of this extra linearity zone, a separatrix appears (as an invariant manifold of a virtual equilibrium) which allows for both canards without head and canards with head to exist; this separatrix provides a canonical choice of repelling slow manifold. We complement their interpretation by the following argument. Arima {\it et al.} based their work on the previous work by Komuro and Saito, who considered a cubic shaped PWL critical manifold (two-piece local system), with a focus equilibrium in the  central zone. Therefore, there is no invariant manifold of 
equilibrium that can play the role of repelling slow manifold and, hence, allow for 
small and large canards to exist. Only small quasi-canards can occur in this situation. However, instead of the focus, one could consider a node equilibrium in the central zone, which would create a repelling slow manifold (one of the stable manifolds of the node). But the problem here would be that no small limit cycle can exist, the cycles are born large (three-zonal). So, in order to combine both the possibility for small cycles and for a repelling slow manifold, one needs the central zone which introduces complex eigenvalues, and the virtual node whose stable manifold provides a repelling slow manifold to the system. Therefore, Arima {\it et al.} found, 
without providing a complete mathematical understanding, the minimal PWL setting to construct a canard-explosive system.

Figure~\ref{ksfig}(b) shows a canard cycle without head and a canard cycle with head in the Arima {\it et al.} model. Note that the authors of~\cite{ARIMA97} claim that the critical manifold in the central zone has to have a non-zero but small-enough slope, which in fact is not true. The only important point is to have complex eigenvalues, therefore one can choose the critical manifold in the central zone to be flat. This is the case in the simulations shown on Figure~\ref{ksfig}(b). When analysing a three-dimensional version of this model, in Section~\ref{3dslowpas} and~\ref{pwlfsing}, we will also keep that slope equal to zero in the central zone.

The work by Arima {\it et al.} was mostly concerned with phase-plane analysis. In the recent years, various works have been done to revisit and complement their results by incorporating some bifurcation analysis. In particular, Pokrovskii {\it et al.}~\cite{PV05} recover (without mentioning it) the main results from~\cite{ARIMA97}, that is, the fact that three pieces are needed to approximate the critical manifold near a fold point, in order to create canard cycles, and that a fourth one allows for canards with head to exist. Also, Rotstein {\it et al.}~\cite{RCG12} obtain bifurcation diagrams corresponding to both supercritical and subcritical canard explosion in this context. Furthermore, they  make the link between canards and excitable dynamics with threshold crossing, in the context of neuron models.

The latest work on this topic is by Fern{\'a}ndez-Garc{\'i}a {\it et al.}~\cite{FG14}. In this work, the main idea of Arima {\it et al.} is revisited once more --- without considering the  fourth zone, only locally near the three-piece PWL approximation of the quadratic fold  ---, from yet a different viewpoint. The main focus is to extend singular perturbation results from the smooth case~\cite{KS01b} in order to prove the existence of maximal canards, as connections between attracting and repelling slow manifolds, and related families of canard cycles (break-up of such connections) in this context. This provides the first singular perturbation analysis of planar PWL canard systems. The present work takes this approach to three-dimensional PWL slow-fast systems, although the presentation is less technical.

Now that results regarding canard cycles in planar PWL slow-fast systems have been reviewed, we can develop the main objective of this paper, that is, to construct and analyse three-dimensional PWL slow-fast systems displaying canard dynamics, especially near folded singularities.
\section{Three-dimensional PWL slow-fast systems: transient MMOs}
\label{3dslowpas}
Once {the key elements allowing the existence of canard cycles in planar PWL systems have been established}, it is natural to consider three-dimensional models. The simplest way to do so is to put a slow drift on the parameter that displays the canard (or quasi-canard) transition in the planar system.  This allows to construct transient canard trajectories in three-dimensional systems, building up on the knowledge from the planar case~\cite{W05}. The problem of making these canards robust (recurrent) is a separate one and requires to construct a global return mechanism, which we will investigate through an example in Section~\ref{pwlmmo}. Below we simply show how to observe numerically a quasi-canard and  a canard explosion, respectively, with a slow drift on the associated parameter. One can find a handful of papers dealing with canards in three-dimensional PWL slow-fast systems~\cite{PT13,PTV14}. However, none of these papers deal with planar canard dynamics with a slow drift, except in the context of 
stochastic systems~\cite{SK11}.

We will now follow a presentation similar to that adopted for the planar case, that is, distinguishing between two-piece local systems and three-piece local systems to approximate a quadratic fold of a smooth slow-fast system. We will simply add a trivial slow dynamics on the parameter displaying the explosion in the planar system. To do that we consider the slow drift 
\begin{eqnarray}
\label{lie-a} \dot{a} &=& c,
\end{eqnarray}
with $c$ a real parameter.

\begin{figure}[!t]
\centering
\includegraphics[width=11cm]{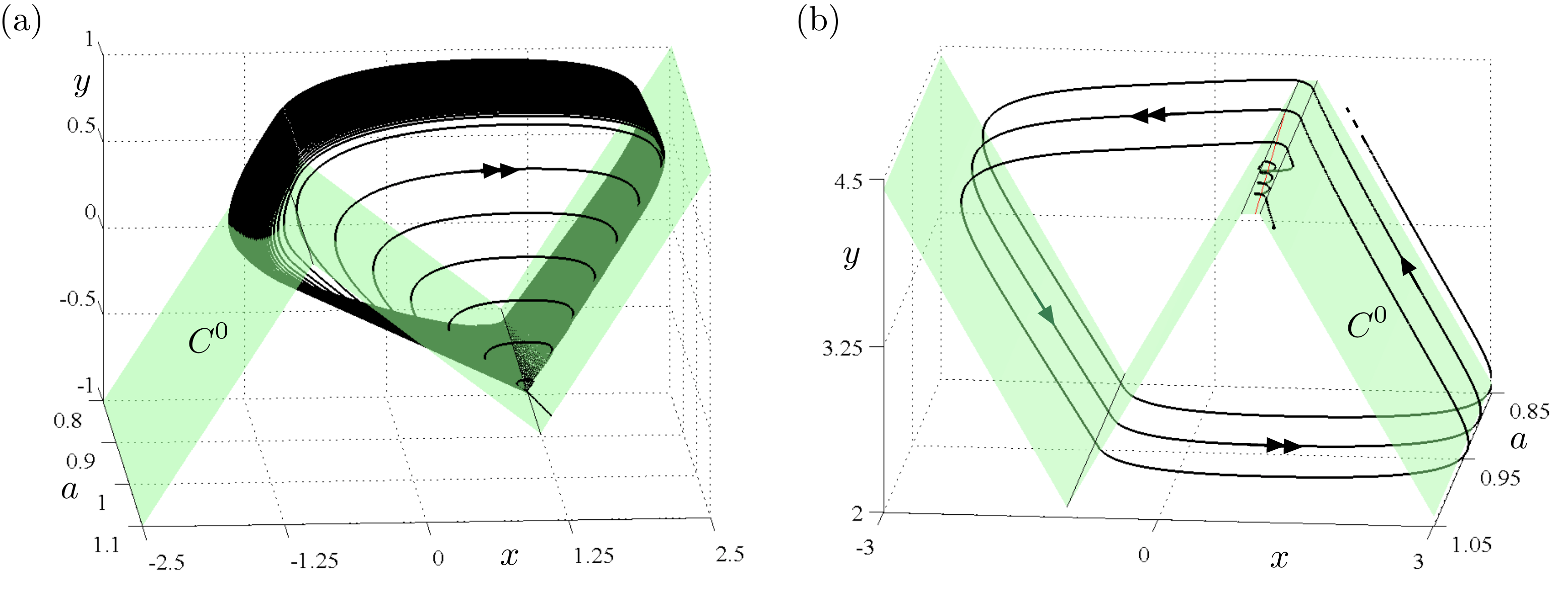}
\caption{\label{ks3dfig}(a) Transient MMO in a three-dimensional version of the two-piece local system~\eqref{lie}; the generating mechanism here is a quasi-canard explosion with a slow drift \eqref{lie-a}. The parameter values for this transient MMO trajectory are: $\eps=0.1$, $k=0.5$, $c=-0.001$. (b) Transient MMO trajectory $\Gamma$ solution to the system from~\cite{ARIMA97} with a drift~\eqref{lie-a} on the slow nullcline. In both panels, $C^0$ denotes the PWL critical manifold.}
\end{figure}
\subsection{Dynamic quasi-canard explosion}
\label{slowpasquasican}
We first consider system~\eqref{lie} with $f(x)=x+\frac 1 2 (1+k) (|x-1|-|x+1|)$ to which we append the slow drift \eqref{lie-a}.
Without any surprise, we can observe a dynamic quasi-canard explosion by direct simulation of the resulting system~\eqref{lie}--\eqref{lie-a}; see Figure~\ref{ks3dfig}(a) with a three-dimensional phase-space view of such a trajectory. This system has the same similarities and the same missing points when compared to three-dimensional smooth canard systems, as the planar system studied by Komuro and Saito when compared to the van der Pol oscillator. That is, an explosive behaviour in the growth of small oscillations but no repelling slow manifold. Overall, one can create transient MMO dynamics but not of true canard type. Therefore the control of SAOs will be different than in the smooth case near a folded node, and the link to existing smooth models is more tenuous. Consequently, we will not investigate this type of MMOs further as we wish to focus on canard-induced MMOs, that is, we want to understand the 
mechanism to create folded 
singularities in three-dimensional PWL slow-fast systems. This is why, as a first step, we will now consider a trivial slow dynamics on the parameter displaying the canard explosion in the system studied by Arima {\it et al.}.

\subsection{ Dynamic canard explosion}
\label{slowpascan}
We consider system~\eqref{lie} with the additional equation~\eqref{lie-a} and with a PWL function $f$ now defined in four zones, that is, $f=F_{\delta}$ where 
\begin{equation}\label{pwlfunc4}
   F_{\delta}(x) = \left\{
     \begin{array}{ll}
	-x+(\beta +1)\delta 		&  \mathrm{if}\quad  x \geq \delta, \\  
        \,\beta x    			&  \mathrm{if}\quad |x| \leq \delta,\\
        \,\;\;x-(\beta-1)\delta 		&  \mathrm{if}\quad x_0<x<-\delta,\\
       -x+2 x_0-(\beta-1)\delta 		&  \mathrm{if}\quad x  \leq x_0.
     \end{array}
   \right.
\end{equation}
This creates canard-induced MMOs in transient dynamics exactly as in the smooth case; see Figure~\ref{ks3dfig}(b) with a three-dimensional phase-space view of such a trajectory.  We (naturally) observe a dynamic canard explosion, and hence, folded node type dynamics. Therefore, we are now confident that we have the correct framework to find where the equivalent of the folded node is, and more generally, how to understand folded singularities in a PWL context. In the next section, we will focus on a three-piece local system in order to unravel the local dynamics near the PWL equivalent of folded singularities, that is, not only folded node but also folded saddle and folded saddle-node.

\section{Three-dimensional PWL slow-fast systems: local dynamics near folded singularities}
\label{pwlfsing}
Let us now analyse the minimal system to reveal PWL folded singularities, whose equations are given by~\eqref{3d} with $f=f_{\delta}$, where
\begin{equation}\label{pwlfunc}
   f_{\delta}(x) = \left\{
     \begin{array}{ll}
        0          & \mathrm{if}\quad |x| \leq \delta,\\
        |x|-\delta & \mathrm{if}\quad |x| \geq \delta.\\
     \end{array}
   \right.
\end{equation}
This system has a three-piece critical manifold, with a central part that is flat and of size $2\delta>0$. At first, the value of $\delta$ is free but it will be constrained after some initial simulations of the system.
We remark that all the results persist with a more general PWL function; in particular, making the slope of the critical manifold in the central part non-zero does not fundamentally alter the overall dynamics. Importantly, it does not remove the possibility for SAOs, the amplitude of which will not be constant because the invariant cylinders to be described in the next section do not exist anymore. We have chosen to consider a flat piece for the critical manifold in the central zone in order to have a minimal model that features all the dynamics that we are interested in.

\subsection{General setting}\label{genfsing}
We start with a few general remarks about the global behaviour of system~\eqref{3d} in the PWL setting, that is, with the function $f$ given by~\eqref{pwlfunc}; see Figure~\ref{fcr} for a pictorial illustration of these remarks.
In order to understand the dynamics of system~\eqref{3d} in the outer zones, that is, for $|x|>\delta$, we make use of the results from~\cite{PT13}. As in this previous work, we obtain expressions for the attracting and the repelling slow manifolds as invariant half-planes spanned by the eigenvectors associated with the slow eigenvalues (one of these eigenvalues being zero); their invariance follows from checking a simple algebraic condition. In particular, the attracting slow manifold is given by
\begin{eqnarray}
\label{attrslowm}S^{A}_{\eps} &=& \left\{x<\;-\delta,\;-\lambda^2_{A}x+\lambda_{A}y + \eps p_2z\;=\;-\delta\lambda_{A} - \frac{ p_2p_3}{\lambda_{A}}\eps^2  \right\},
\end{eqnarray}
with $\lambda_A=-\frac{1+\sqrt{1-4\eps p_1}}{2}$, and the repelling slow manifold is given by
\begin{eqnarray}
\label{repslowm}S^{R}_{\eps} &=& \left\{x>\;\delta,\;-\lambda^2_{R}x+\lambda_{R}y + \eps p_2z\;=\;-\delta\lambda_{R} - \frac{ p_2p_3}{\lambda_{R}}\eps^2 \right\},
\end{eqnarray}
with $\lambda_R=\frac{1+\sqrt{1-4\eps p_1}}{2}$; see~\cite{PT13} for details. Note that due to the symmetry of~\eqref{pwlfunc} we have $\lambda_R=-\lambda_A$.

We now describe the dynamics in the central zone, that is, on and in between the two switching planes $\{x=\pm\delta\}$. In this zone, the fast equation reduces to $\eps\dot{x}=-y$, which implies that the direction of the flow on the switching planes depends upon the sign of $y$. Moreover, the lines $\{x= -\delta, y=0\}$ and $\{x= \delta, y=0\}$ correspond to the locus of tangency points.
On the other hand, it is easy to check that the  function of the state variables 
\begin{equation}\label{axis}
H(x,y,z) = \eps p_1(p_1x+p_2z)^2+(p_1y-\eps p_2p_3)^2
\end{equation}
is a first integral for the system in the central zone.

If $p_1$ is negative, the level sets of $H$ are constituted by two disjoint surfaces, invariant by the central flow, whose sections with $x$ equal to constant are two branches of hyperbola with a common center on the axis given by
\begin{eqnarray}
\label{axis-xy} x = -\frac{p_2}{p_1}z,\quad y = \frac{\eps p_2p_3}{p_1}.
\end{eqnarray}
Moreover, the matrix defining the vector field in the central zone does not have complex eigenvalues. Therefore, we conclude that, when $p_1<0$, no rotation can happen in this zone.\newline In contrast, when $p_1>0$, the level sets of $H$ are cylinders, invariant by the central flow, with a common axis given by \eqref{axis-xy}. Furthermore, the non-zero eigenvalues of the matrix defining the flow in the central zone are $\pm i\, \sqrt{\varepsilon p_1}$, therefore trajectories do rotate in this zone. Indeed, the line segment~\eqref{axis-xy} organises the dynamics of the full system by acting as an axis of rotation for trajectories that display Small-Amplitude Oscillations (SAOs) in the central zone, which corresponds to the so-called {\it weak canard} in the smooth case~\cite{SW01,W05}. In the remainder of the paper, we will always assume $p_1>0$ when considering SAO dynamics. The dynamics for $p_1>0$ in the vicinity of the central zone, together with important invariant objects, is 
sketched in Figure~\ref{fcr}. 
\begin{figure}[!t]
\begin{center}
\includegraphics[width=11cm]{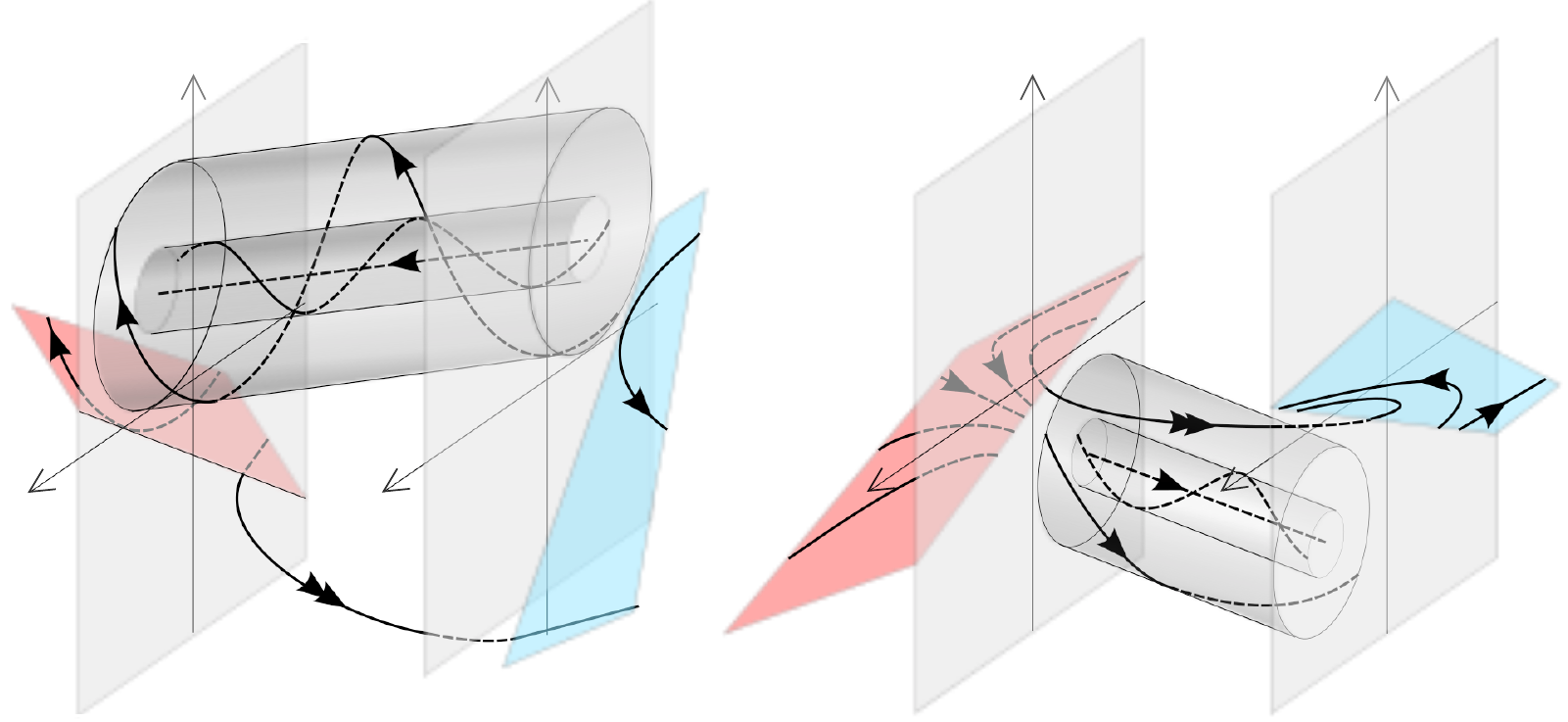}
\begin{picture}(0,0)
 \put(-318,136){(a)}
 \put(-316,49){$z$}
 \put(-286,125){$y$}
 \put(-323,67){$S^A_{\eps}$}
 \put(-179,74){$S^R_{\eps}$}
 \put(-253,17){$\gamma_s$}
 \put(-318,-5){$\{x=-\delta\}$}
 \put(-249,-5){$\{x=\delta\}$}
 \put(-243,84){$\gamma_f$}
 \put(-160,136){(b)}
 \put(-118,125){$y$}
 \put(-148,49){$z$}
 \put(-160,15){$S^A_{\eps}$}
 \put(-15,74){$S^R_{\eps}$}
 \put(-77,37.5){$\gamma_w$}
 \put(-84,64.5){$\gamma_s$}
 \put(-150,-5){$\{x=-\delta\}$}
 \put(-78,-5){$\{x=\delta\}$}
\end{picture}
\caption{\label{fcr}Sketch of the flow given by system \eqref{3d}-\eqref{pwlfunc} with $p_1>0$: attracting slow manifold ($S^A_{\eps}$), repelling slow manifold ($S^R_{\eps}$), weak canard ($\gamma_w$), faux canard ($\gamma_f$) and strong canard ($\gamma_s$). Panel (a) describes the case of a folded saddle, and panel (b) that of a folded node. See Section~\ref{fsad} and~\ref{fnod} for details.}
\end{center}
\end{figure}
In particular, the 
attracting slow manifold is shown as a red plane in the left 
zone, the repelling slow manifold as a blue plane in the right zone, and two level sets of the function $H$ given by equation~\eqref{axis} are drawn in the central part together with a few phase trajectories (black lines). Two cases are sketched, corresponding to the folded saddle scenario (panel (a)) and to the folded node scenario (panel (b)), and special solutions are highlighted. Some of these solutions are canards, such as $\gamma_w$ and $\gamma_s$ on panel (b). Since the one on panel (a) (labelled $\gamma_f$) has the opposite direction, that is, it goes from the repelling slow manifold to the attracting one, it is called a {\it faux canard}. See Section~\ref{foldsing} below for details about these scenarios in the PWL context.

The number of SAOs increases as trajectories approach the axis of rotation. Given that the angular velocity of trajectories is constant, equal to $\sqrt{\eps p_1}$ (since the system is the central zone is linear), the number of complete turns is bounded by the time needed for an orbit starting on the axis of rotation to flow between the switching planes $\{x=\pm\delta\}$. This time can easily be computed since the $z$ equation of system~\eqref{3d}-\eqref{pwlfunc} is just a constant drift. From~\eqref{3d} and~\eqref{axis-xy}, it is easy to see that on the rotation axis $$ \dot{x} = -\frac{p_2}{p_1}\dot{z}= -\frac{p_2 p_3}{p_1}\varepsilon.$$ Thus, the time at which an orbit starting at the rotation axis reaches one switching plane, having started on the other, is given by
\begin{eqnarray}\label{def.max_flight_time}
t^{\ast} &=& \frac{2p_1\delta}{\eps |p_2 p_3|}.
\end{eqnarray}
Therefore, the associated maximal winding (or rotation) number $\mu$ is obtained as 
\begin{equation*}
\mu = \frac{\delta}{\pi\sqrt{\eps}} \frac{p_1\sqrt{p_1}}{|p_2p_3|}.
\end{equation*}
Note that the quantity $\mu$ is reminiscent of the eigenvalue ratio at a folded singularity in the smooth setting~\cite{SW01,W05}. Even if there is no direct correspondence between these two quantities, they play the same role in each of the two frameworks, namely to control the maximal number of SAOs that can occur in the system. Moreover, in the smooth case, this maximal winding number is independent of $\eps$. Thus, in order to reproduce quantitatively the behaviour observed in the smooth context, we choose the size of the central part $\delta$ to be a function of $\sqrt{\eps}$. Note that this choice is clearly suggested by the expression obtained for $\mu$. Therefore, from now on, we fix 
\begin{equation}\label{delta}
\delta=\pi\sqrt{\eps}
\end{equation}
and consequently the maximal winding number $\mu$ is constant and satisfies
\begin{equation}\label{maxrotnum}
\mu=\dfrac {p_1 \sqrt{p_1}}{|{p_2 p_3}|}.
\end{equation}
The choice that we make for $\delta$ is necessary in order to obtain, in a PWL framework, a complete match with the behaviour of smooth slow-fast systems near folded singularities, that is, not only qualitative but also quantitative in the sense that the maximal winding number $\mu$ is finite, independent of $\varepsilon$ and related, via a scaling factor, to the corresponding quantity from the smooth case. An important consequence of the $\eps$-dependence of $\delta$ is that the central part of the PWL critical manifold collapses to a single corner line in the singular limit $\eps=0$, that is, the three-piece local system for $\eps>0$ converges, in the singular limit, to a two-piece local system. 

A direct consequence of the above calculations is that one can see the central zone, needed to obtain canard dynamics, as a blow-up of the corner line that exists in the singular limit. It is also remarkable that the size of this blow-up, $O(\sqrt{\eps})$, matches that of the smooth case. Indeed, when blow-up is performed near non-hyperbolic points in smooth slow-fast systems, it can be proven that the region of hyperbolicity, where canards are shown to exist, is extended in the blown-up locus by a size of $O(\sqrt{\eps})$; see~\cite{KS01b}. We believe that this interpretation of the central zone as a blow-up of the corner line (that appears in the singular limit) is novel and relates to the analysis of smooth slow-fast systems near non-hyperbolic points. Besides, this kind of blow-up seems simpler than its counterpart from the smooth framework in the sense that it does not increase the dimension of the system but 
just adds another linearity zone.
A more geometrical yet complementary view on this central zone is the idea that the linearisation of a parabola by a corner provides a good approximation away from the fold, however near the fold three linear pieces are needed in order to avoid losing too much information.

\subsection{Singular flow}
The fact that the central part of the PWL critical manifold shrinks to a single corner line in the limit $\eps=0$ implies that the singular dynamics of system~\eqref{3d}-\eqref{pwlfunc} in the PWL setting is restricted to two half-planes, one for the attracting part of the dynamics and one for the repelling part. Therefore, the information about the connection from one part to the other is lost. To overcome this difficulty, we propose to keep track of the limit of the central dynamics, as $\eps$ tends to $0$, while maintaining the presence of the central zone; we will say that the central zone stays ``open'' in the singular limit. Consequently, the reduced system that we will consider in order to derive the slow flow is the limit when $\eps=0$ of system~\eqref{3d} with $f=f_{\tilde{\delta}}$, for a certain $\tilde{\delta}>0$ independent of $\eps$. Then, we can apply the usual procedure to find the slow flow. Namely, we set $\eps=0$ in~\eqref{3d}, which yields the following differential-algebraic system
\begin{eqnarray}
\label{red-x}-y+f_{\tilde{\delta}}(x) &=& 0,\\
\label{red-y}\dot{y} &=& p_1x+p_2z,\\
\label{red-z}\dot{z} &=& p_3.
\end{eqnarray}
As it is customary with slow-fast systems in the slow (singular) limit, we differentiate the algebraic equation~\eqref{red-x} with respect to time, and project the overall reduced system onto the  $(z,x)$-plane. Restricting our attention to points not on the separation planes, this brings
the following planar reduced system
\begin{eqnarray}
\label{red2-x}-f'_{\tilde{\delta}}(x)\dot{x} &=& -(p_1x+p_2z),\\
\label{red2-z}\dot{z} &=& p_3.
\end{eqnarray}
In the smooth case~\cite{DGKKOW}, the reduced system is singular along the fold curve of the critical manifold, defined by $\{f'(x)=0\}$ in the formulation of system~\eqref{3d}. In order to overcome this problem, one typically rescales the time by a factor $-f'(x)$, thus obtaining a regular system with the possibility for equilibria on the fold curve provided that $p_1x+p_2z$ also vanishes. In the original system, this corresponds to the cancellation of a simple zero in the numerator and denominator of the right-hand side of the $x$ equation. The so-called {\it Desingularised Reduced System (DRS)} then has a true equilibrium on the fold curve whereas the original reduced system does not. This is because the time rescaling reverses the orientation of trajectories when $f'(x)>0$, that is, along the repelling sheet of the critical manifold. Such an equilibrium is therefore called a {\it folded singularity} (or {\it pseudo-equilibrium}~\cite{benoit90}) and, according to its topological type as an 
equilibrium of the DRS, one speaks of {\it folded node}, {\it folded saddle}, {\it folded focus}.

It is important to note that the singular nature of flow of the reduced system along the fold curve can be resolved at one point, and that point is the folded singularity. Moreover, such a point appears (because of the time rescaling performed for the desingularisation) as a way to pass from the attracting to the repelling sheets of the critical manifold, which defines {\it singular canards}. There can be infinitely many such canards in the limit $\eps=0$ (this is the case for folded node). However, they are organised by special singular canards obtained from the two eigendirections of the folded singularity (provided it has real eigenvalues) and called {\it singular weak canard} and {\it singular strong canard} according to the modulus of the associated eigenvalue. 

\subsubsection{Singular weak canard}
Back to the PWL context, since $f'_{\tilde{\delta}}$ is a piecewise-constant function, equation~\eqref{red2-x} does not require any desingularisation procedure as in the smooth case. On the contrary, it gives a regular system in each zone. Namely, in the outer zones the reduced system has the following form
\begin{eqnarray}
\label{drspwlxy}
  \begin{array}{l}
    \dot{x} = \sgn(x)(p_1x+p_2z),\\
    \dot{z} = p_3,
  \end{array}
\end{eqnarray}
with $x<-\tilde{\delta}$ for the attracting (lower) zone and $x>\tilde{\delta}$ for the repelling (upper) one; see, for instance, Figure~\ref{fsadfig1}. In the central part, the resulting system is more degenerate and its equations are given by
\begin{eqnarray*}
	0 &=& p_1x+p_2z,\\
  \dot{z} &=& p_3.
\end{eqnarray*}
Therefore, the information is concentrated on exactly one straight line, which coincides with the limit of the rotation axis for $\eps=0$; see formula~\eqref{axis-xy}. Hence, we define the orbit corresponding to the $\eps=0$ limit of the rotation axis as the {\it singular weak canard in the PWL framework}~\cite{SW01,W05}. 

\subsubsection{Maximal canards and singular strong canard}
Since the reduced system in the central zone does not give any further information, one needs to find another strategy in order to define the singular strong canard. Going back to the system in the central zone for $0<\eps\ll 1$, under natural assumptions we obtain, in Proposition \ref{propcan} below, explicit solutions passing from the attracting slow manifold to the repelling one, which then correspond to maximal canards. In particular, there is exactly one maximal canard that passes from one side to the other without completing a full rotation; by definition, this special solution is the {\it strong canard}; see Figure~\ref{fcr} (b). We then define the {\it singular strong canard in the PWL framework} as the limit of this special solution when $\eps\to0$, while maintaining the central zone open.

The overall system is reversible with respect to the involution $$(x,y,z,t) \mapsto (R(x,y,z),-t),$$ where $R(x,y,z)=(-x,y,-z)$. Consequently, the intersection lines between the slow manifolds and their nearby switching plane are symmetric with respect to $R(x,y,z)$; these lines are given by
\begin{eqnarray}
\label{attrepmandelta}
  \begin{array}{l}
    L^A_{\eps} = \left\{x=-\delta,\;y+\frac{\eps p_2z}{\lambda_A}\;=\;-\delta(1+\lambda_A)-\frac{ p_2p_3}{\lambda^2_A} \eps^2 \right\},\\ \\
    L^R_{\eps} = \left\{x=\delta,\quad y+\frac{\eps p_2z}{\lambda_R}\;=\;-\delta(1-\lambda_R)-\frac{p_2p_3}{\lambda^2_R}\eps^2 \right\}.
  \end{array}
\end{eqnarray}
Let $\left[x \right]$ stand for the integer part of $x$. From the above elements, we conclude the following result. 
\begin{proposition}\label{propcan}
Consider system~\eqref{3d}-\eqref{pwlfunc} with $p_3> 0$, $\delta=\pi\sqrt{\varepsilon}$ and $\varepsilon$ small enough. Assuming that different maximal canards have different flight times, the following statements hold.
\begin{itemize}
 \item [a)] Maximal canards $\gamma$ are reversible orbits, that is the intersection points with the separation planes $\mathbf{p} \in \gamma \cap L^a_{\eps}$ and $\mathbf{q} \in \gamma \cap L^r_{\eps}$ are related to one another by the reversibility $R$; hence $\mathbf{q}=R(\mathbf{p})$.
 \item [b)] If $p_1>0$ and $p_2<0$, for every integer $k$ with $0\leq k \leq [\mu]$, where $\mu$ is the maximal rotation number defined in~\eqref{maxrotnum}, there exists a maximal canard $\gamma_k$ intersecting the switching plane $\{x\!=\!-\delta\}$ at $\mathbf{p}_k=(-\delta,y_{k},z_{k})$ where 
  \begin{equation}\label{def:maximal_canards}
   \begin{array}{l}
    y_{k} = - \left( \left( k+\dfrac 1 2 \right) \dfrac {p_2p_3}{\sqrt{p_1}} + p_1 \right) \pi \varepsilon^{\frac 3 2 } -p_2p_3 \varepsilon^2+ O(\varepsilon^{\frac 5 2}), \\ \\
    z_{k} = -\left(k+ \dfrac 1 2 \right) \dfrac {p_3}{\sqrt{p_1}} {\pi} \sqrt{\varepsilon}+ O(\varepsilon). 
   \end{array}
 \end{equation}
 Moreover, $\gamma_k$ turns $k$ times around the weak canard $\gamma_w$. 
 \item [c)] If $p_1>0$ and $p_2>0$, there exists a unique maximal canard $\gamma_0$ intersecting the switching plane at $\mathbf{p}_0=(-\delta,y_{0},z_{0})$ where the coordinates $y_{0}$ and $z_{0}$ satisfy equation \eqref{def:maximal_canards} with $k=0$.
 \item [d)] If $p_1<0$, there are no maximal canards.
\end{itemize}
\end{proposition}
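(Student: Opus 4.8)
The plan is to reduce the whole statement to an explicit integration of the linear flow in the central slab $|x|\le\delta$, organised by the reversibility $R(x,y,z)=(-x,y,-z)$. I would prove (a) first. Using $\lambda_R=-\lambda_A$ one checks that $R$ is a time-reversing symmetry of \eqref{3d}--\eqref{pwlfunc} which exchanges $S^A_\eps\leftrightarrow S^R_\eps$ and $L^A_\eps\leftrightarrow L^R_\eps$. Hence if $\gamma$ is a maximal canard with entry $\mathbf p\in\gamma\cap L^A_\eps$ and exit $\mathbf q\in\gamma\cap L^R_\eps$, then $R(\gamma)$ is again a maximal canard, run in reverse time and with the same flight time. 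Under the hypothesis that distinct maximal canards have distinct flight times this forces $R(\gamma)=\gamma$; since the exit point is the unique intersection of $\gamma$ with $L^R_\eps$, the relation $R(\mathbf p)=\mathbf q$ follows. A dividend of this step, used throughout the rest, is that each maximal canard meets the fixed set $\{x=0,\,z=0\}$ of $R$ at the midpoint of its flight, which I take as $t=0$.

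Next I would integrate the central system in the coordinates $\xi=p_1x+p_2z$, $\eta=p_1y-\eps p_2p_3$, in which it decouples as $\dot\xi=-\eta/\eps$, $\dot\eta=p_1\xi$, $\dot z=p_3$, with first integral $H=\eps p_1\xi^2+\eta^2$ and, for $p_1>0$, harmonic motion of frequency $\omega=\sqrt{p_1/\eps}$. The symmetric crossing at $t=0$ gives $\xi(0)=z(0)=0$ and the closed forms $\xi(t)=-\eta(0)(\eps p_1)^{-1/2}\sin\omega t$, $\eta(t)=\eta(0)\cos\omega t$, $x(t)=(\xi(t)-p_2p_3t)/p_1$, $z(t)=p_3t$, with $x$ odd in $t$. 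A maximal canard is then pinned down by two scalars, the amplitude $\eta(0)$ and the half-flight angle $\theta=\omega T/2$, through (I) the exit condition $x(T/2)=\delta$ and (II) membership of the exit point in $L^R_\eps$ (equivalently, by (a), of the entry point in $L^A_\eps$). Inserting the solution together with $\delta=\pi\sqrt\eps$ and $\lambda_R=1-\eps p_1+O(\eps^2)$ and keeping leading orders, I expect (I)--(II) to collapse to $\eta(0)=O(\eps)$, the amplitude relation $\eta(0)\cos\theta=-\eps p_2p_3+O(\eps^{3/2})$, and a single quantisation equation $\tan\theta-\theta=\pi p_1^{3/2}/(p_2p_3)+O(\sqrt\eps)$.

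The core of (b) and (c) is the analysis of this quantisation, whose right-hand side equals $\sgn(p_2)\,\pi\mu$ with $\mu$ as in \eqref{maxrotnum}. Since $g(\theta)=\tan\theta-\theta$ sweeps monotonically from $-\infty$ to $+\infty$ between consecutive poles, its roots form an increasing sequence $\theta_0<\theta_1<\cdots$, the orbit through $\theta_k$ winding $k$ times about the weak canard $\gamma_w$. I would then impose admissibility — that $x(t)$ first reaches $\delta$ exactly at $t=T/2$ — which compares the oscillation amplitude $|\eta(0)|/(p_1\sqrt{\eps p_1})$ with the slab half-width $\delta$. For $p_2<0$ the drift carries the rotation axis towards $x>0$, the admissible roots are those winding no more than the maximal number $\mu$ of times, i.e. $k=0,\dots,[\mu]$, and their leading values $\theta_k=(k+\tfrac12)\pi$ fed through $z_k=-p_3\theta_k\sqrt\eps/\sqrt{p_1}$ and the defining equation of $L^A_\eps$ reproduce the expansions \eqref{def:maximal_canards}. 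For $p_2>0$ the drift is adverse: a crossing then demands an amplitude so large that all roots beyond the first push $x$ out of the slab before $T/2$, leaving only $\gamma_0$, which is (c).

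For (d) I would redo the central integration with $p_1<0$: the $(\xi,\eta)$ block is now hyperbolic, the level sets of $H$ are hyperbolae, and the solution is the previous one with $\cos,\sin,\omega$ replaced by $\cosh,\sinh,\Omega=\sqrt{-p_1/\eps}$. The same reduction yields $\eta(0)\cosh\Theta=-\eps p_2p_3+O(\eps^{3/2})$ and $\tanh\Theta-\Theta=-\pi|p_1|^{3/2}/(p_2p_3)$; since $\tanh\Theta-\Theta$ is strictly negative and strictly decreasing for $\Theta>0$, there is no root when $p_2<0$, while for $p_2>0$ the unique formal root lies beyond the first maximum of $x(t)$ and so is not a genuine first crossing. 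Either way no connecting orbit survives, giving (d). I expect the main obstacle to be precisely this admissibility bookkeeping: converting the formal count of roots of a transcendental equation into an exact count of true connecting orbits requires controlling, uniformly in $\eps$, the first exit time of the oscillating $x(t)$ from $|x|\le\delta$ and checking that it coincides with landing on $L^R_\eps$. Establishing the stated orders of the coefficients in \eqref{def:maximal_canards}, and that $\theta_k$ is captured by its branch value $(k+\tfrac12)\pi$ to the required accuracy, is the remaining delicate point.
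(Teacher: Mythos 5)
For parts (a)--(c), your proposal is essentially the paper's own proof in a slightly cleaner dress: reversibility plus the distinct-flight-time hypothesis for (a); explicit integration of the central linear flow; reduction of the boundary conditions on $L^A_{\eps}$, $L^R_{\eps}$ to a transcendental quantisation equation; branch-by-branch root counting; and a containment check, your amplitude $|\eta(0)|/(p_1\sqrt{\eps p_1})$ being exactly the paper's invariant-cylinder amplitude $\Delta$ of \eqref{eq.amp.cyl}. Anchoring each orbit at its mid-flight crossing of the symmetry axis $\{x=0,\,z=0\}$ and working with the half-angle $\theta$ is a genuine (if modest) improvement: it yields $\tan\theta-\theta=\sgn(p_2)\,\pi\mu+O(\sqrt{\eps})$ directly and dispenses with the paper's discussion of which half of the roots of the full-angle equation \eqref{eq:tag_sol_p1_p} are spurious. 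The asymptotic point you defer (that the admissible roots are captured by $(k+\tfrac{1}{2})\pi$ to the accuracy stated in \eqref{def:maximal_canards}) is deferred in the paper as well (it simply sets $z_k\approx r_{2k+1}$), so there you are on equal footing.

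The genuine gap is in part (d), sub-case $p_2>0$, and your step there fails. Note first that the paper's argument is of a different nature: it claims that its exact equation \eqref{eq:tagh_p} has no solutions whatsoever, because the extreme values of $Q(z)^{-1}$ are exactly $\pm2\sqrt{\eps|p_1|}\,|\lambda_A|$, so the right-hand side has modulus at least $1$ while $|\tanh|<1$; no admissibility argument is used or needed. You instead concede a root $\Theta^*$ of $\tanh\Theta-\Theta=-\pi|p_1|^{3/2}/(p_2p_3)$ and disqualify it by asserting that it ``lies beyond the first maximum of $x(t)$''. That assertion is provably false. Set $\tilde A=\pi|p_1|^{3/2}-p_2p_3\Theta$; your two conditions read, exactly, $\eta(0)\sinh\Theta=\eps\tilde A$ and (using $\lambda_R(\lambda_R-1)=\eps|p_1|$ to simplify the $L^R_{\eps}$-condition) $\eta(0)\cosh\Theta=-\tfrac{\eps}{\lambda_R}\big(p_2p_3+\sqrt{\eps|p_1|}\,\tilde A\big)$. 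Any root forces $\tilde A<0$ (otherwise the two equations give opposite signs for $\eta(0)$), and then $\dot x(T/2)=\big(\eta(0)\cosh\Theta+\eps p_2p_3\big)/(\eps|p_1|)=\big(p_2p_3(\lambda_R-1)-\sqrt{\eps|p_1|}\,\tilde A\big)/(\lambda_R|p_1|)>0$: the orbit reaches $x=\delta$ while still moving rightwards, hence strictly before the first maximum of $x(t)$, and monotonicity of $\cosh$ makes $x$ monotone on $[0,T/2]$. So the formal root passes your own admissibility test.

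Worse, the root does exist: eliminating $\eta(0)$ gives $\tanh\Theta=\lambda_R|\tilde A|/\big(p_2p_3-\sqrt{\eps|p_1|}\,|\tilde A|\big)$, whose right-hand side increases continuously from $0$ through $1$ as $\Theta$ increases past $\pi|p_1|^{3/2}/(p_2p_3)$, while the left-hand side stays below $1$; the intermediate value theorem then produces a transversal root, and by the computation above the corresponding orbit stays in the central zone and joins $L^A_{\eps}$ to $L^R_{\eps}$. In other words, your framework, executed exactly, contradicts statement (d) rather than proving it. This puts the proposal in direct tension with the paper's Appendix: the two cannot both be right, and in fact the paper's hyperbolic formulas \eqref{sinhcosh} do not satisfy $\cosh^2-\sinh^2=1$ as printed (the two right-hand sides appear interchanged), which is exactly the kind of slip that turns \eqref{eq:tagh_p} into its reciprocal and lets the modulus argument go through spuriously. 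Whatever the final verdict on (d) itself, your one-sentence admissibility claim is not a proof of it, and this case cannot be settled by the leading-order bookkeeping that suffices elsewhere in your proposal.
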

\begin{proof}
The proof of the proposition is given in Appendix A.
\end{proof}\newline

Therefore, from part (b) and (c), we can conclude that $\gamma_0$ is the strong canard in both the folded-node and the folded-saddle cases, respectively.
\begin{rem}
The assumption about flight time of maximal canards in Proposition~\ref{propcan} implies that every maximal canard is reversible, as it is the case in the smooth context. In this sense we consider this assumption as a very natural one. Note that relaxing this assumption could give the possibility for non-unique maximal canards with a given rotation number, which would then be solutions that are not related to the smooth case and on which we do not want to focus here.
\end{rem}

Due to the reversibility of the maximal canards, the projection of the strong canard onto the $(z,x)$ plane passes through the origin. In the singular limit, only one point  of the strong canard remains in the central zone, that is, the origin itself. However, maintaining that zone open, and noticing that, from \eqref{def:maximal_canards}, the slope $(-z_{0},\delta)$ of the projection of the strong canard converges to $\left(\frac 1 2 \frac {p_3}{\sqrt{p_1}}, 1 \right)$ as $\eps$ tends to zero, we choose to take this direction through the origin as our definition of the \textit{singular strong canard.}

\begin{figure}[!b]
\centering
\includegraphics[width=9cm]{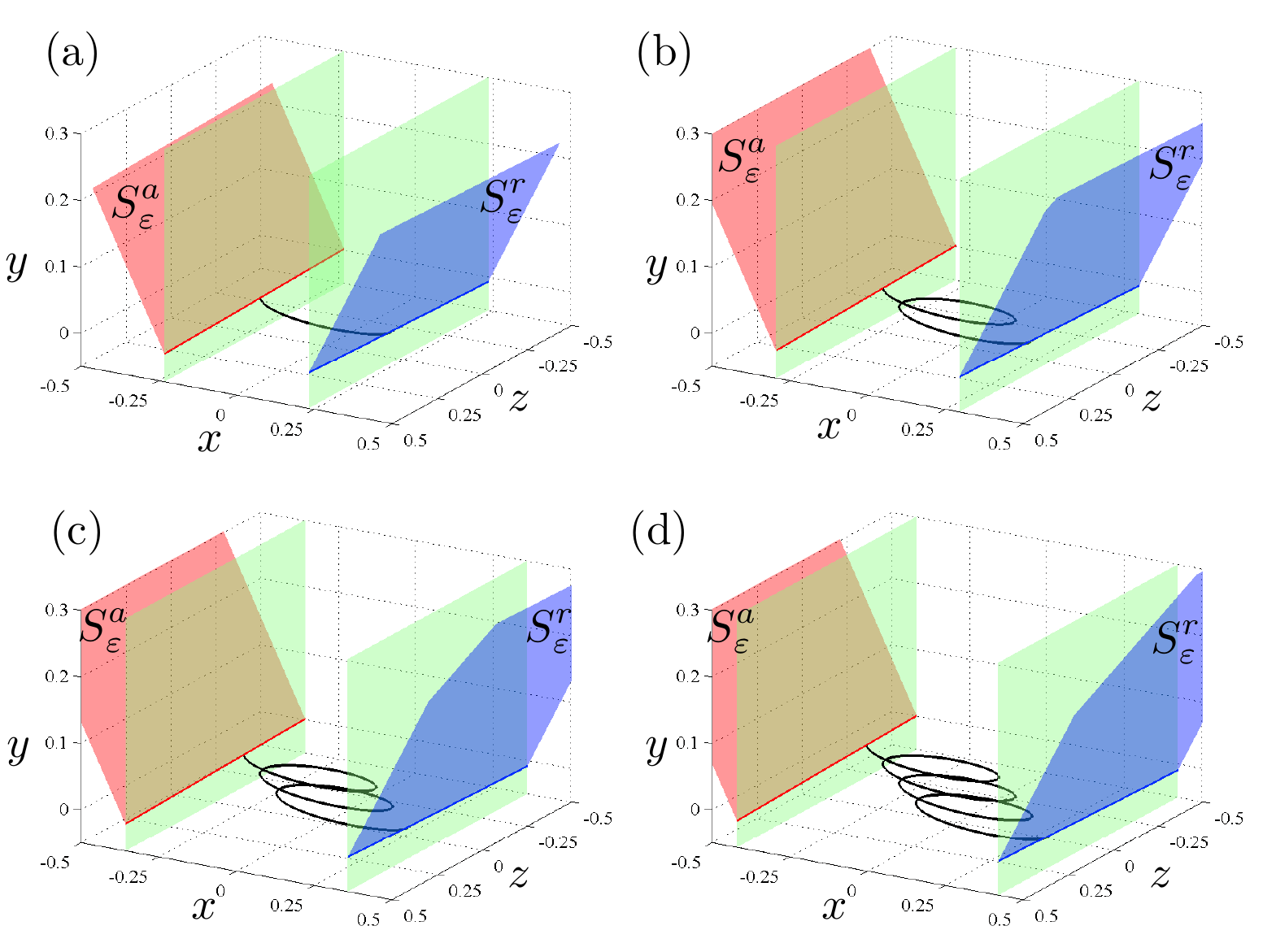}
\caption{\label{excan} Exact canard solutions obtained for $\delta=\delta_k$, $k=0,1,2,3$ in panels (a), (b), (c) and (d), respectively. In each panel, the attracting slow manifold $S^a_{\eps}$ (red) and the repelling slow manifold $S^r_{\eps}$ (blue) are also shown. In each panel, we have fixed: $\eps=0.01$, $p_1=1$, $p_2=-1$, $p_3=0.2$.}
\end{figure}
The singular weak and strong canards that we have just defined in the central zone, are the only two directions that remain, in the singular limit $\eps=0$, as ways to pass from the attracting part to the repelling part of the critical manifold. This is because the central zone, kept open in the limit $\eps=0$, acts as a blow up of the corner line in order to understand the possibilities for such passage from one side of the critical manifold to the other. In the smooth case, this role is played by the folded singularity, which is not an equilibrium of the reduced flow but an ``opening'' through the fold line (along which the reduced system is singular) allowing the passage from one side to the other. Notice that the two directions that allow for this passage in the singular limit of the smooth case are obtained from the linearisation of the desingularised reduced system (DRS), as eigenvectors of the folded singularity (true equilibrium of the DRS); see~\cite{DGKKOW} for 
details. Hence, they play the same role as the singular canards which we have defined in the central zone in the PWL setting. We will identify the singular weak and strong canards in the outer zones, both for the folded saddle and for the folded node cases, in Section~\ref{fsad} and~\ref{fnod}, respectively. 

Although the results presented in Proposition~\ref{propcan} are valid for all $\delta$ small enough (with a proper scaling as a function of $\eps$), we wish to emphasise the fact that the PWL framework also leads to the existence of very simple and explicit maximal canards for certain specific values of $\delta$. These particular values of $\delta$ (within the correct scaling) allow the overall angle in the central zone to be exactly an odd multiple of $\pi$, hence making the expression for one ``selected'' maximal canard very simple. This result is the purpose of the following Proposition and it highlights the simplifying aspect of the PWL framework for canard problems. In the perspectives of using the PWL framework in order to develop simple yet accurate neuron models, the possibility for such ``selected'' explicit canard solutions can potentially be of great use to study the associated boundary between different levels of activity.

\begin{proposition}\label{selected}
Consider system~\eqref{3d}-\eqref{pwlfunc} with $p_1>0$, $p_2<0$, $p_3> 0$, and fix $\delta=\delta_k$ where $\delta_k=-\frac{p_2p_3}{p_1^2}\left(\left(k+\frac{1}{2}\right)\pi\sqrt{\eps p_1}+1\right)$ for $k=0,1,2,\ldots [\mu]$, where $\mu$ is the maximal rotation number defined in~\eqref{maxrotnum}. Then, the $k$th secondary canard (resp. strong canard when $k=0$) crosses the left switching plane $\{x=-\delta_k\}$ at 
$$(x,y,z)=\left(-\delta_k,\eps\frac{p_2p_3}{p_1},-\left(k+\frac{1}{2}\right)\pi\frac{p_3}{p_1}\sqrt{\eps p_1}\right),$$ 
and takes, in the central zone, the following explicit time parametrisation:
\begin{eqnarray}
\label{k-x} x_{\mathrm{sc}}(t) &=& \frac{p_2p_3}{p_1^2}\cos(\sqrt{\eps p_1}t)-\sqrt{\eps p_1}\frac{p_2p_3}{p_1^2}\left(\sqrt{\eps p_1}t - \left(k+\frac{1}{2}\right)\pi\right),\\
\label{k-y} y_{\mathrm{sc}}(t) &=& -\sqrt{\eps p_1}\frac{p_2p_3}{p_1^2}\big(\sin(\sqrt{\eps p_1}t)-\sqrt{\eps p_1}\big),\\
\label{k-z} z_{\mathrm{sc}}(t) &=& ~~\sqrt{\eps}p_3\left(\sqrt{\eps}t - \left(k+\frac{1}{2\sqrt{p_1}}\right)\pi\right), 
\end{eqnarray}
for $0\leq t \leq (2k+1)\pi \frac 1{\sqrt{\varepsilon p_1}}$.
\end{proposition}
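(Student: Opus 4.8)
The plan is to work entirely in the central zone $\{|x|\le\delta_k\}$, where system~\eqref{3d}-\eqref{pwlfunc} reduces to the affine-linear system with $f\equiv 0$, to exhibit the claimed curve as the explicit solution of an initial value problem there, and then to identify it with the $k$th maximal canard of Proposition~\ref{propcan} through the reversibility $R$. Writing $'$ for differentiation with respect to the parametrisation time $t$ of \eqref{k-x}-\eqref{k-z}, the central equations are the linear system $x'=-y$, $y'=\eps(p_1x+p_2z)$, $z'=\eps p_3$, whose non-zero eigenvalues are $\pm i\sqrt{\eps p_1}$ as recalled in Section~\ref{genfsing}. Eliminating $y$ and using $z(t)=z_0+\eps p_3 t$ gives the forced harmonic oscillator $x''+\eps p_1 x=-\eps p_2 z(t)$ of frequency $\omega:=\sqrt{\eps p_1}$, whose particular solution is $x_\ast(t)=-\tfrac{p_2}{p_1}z(t)$, that is, exactly the rotation axis~\eqref{axis-xy} (the singular weak canard). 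Every central orbit is therefore a rotation of some radius about this drifting axis.

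Next I would solve the IVP with entry datum $\mathbf p_k=(-\delta_k,\eps p_2p_3/p_1,z_k)$ on $\{x=-\delta_k\}$. The decisive simplification is that the prescribed entry value $y(0)=\eps p_2p_3/p_1$ equals the axis value $y_\ast=\eps p_2p_3/p_1$, which forces the $\sin(\omega t)$ component of $x$ to vanish; the solution is then purely cosinusoidal, with amplitude $A=p_2p_3/p_1^2$, and after inserting $z_0=z_k$ it coincides term by term with \eqref{k-x}-\eqref{k-z}. Checking $x(0)=-\delta_k$ is then precisely the content of the definition of $\delta_k$, since $x(0)=A+x_\ast(0)=A\bigl(1+(k+\tfrac12)\pi\sqrt{\eps p_1}\bigr)=-\delta_k$ under the sign hypotheses $p_1>0$, $p_2<0$, $p_3>0$ (which also give $A<0$, hence $\delta_k>0$).

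I would then verify that this orbit is a genuine maximal canard. Evaluating at $t^\ast=(2k+1)\pi/\omega$, the upper bound of the parameter interval, one has $\omega t^\ast=(2k+1)\pi$, so $\cos(\omega t^\ast)=-1$, $\sin(\omega t^\ast)=0$, giving $x(t^\ast)=\delta_k$, $y(t^\ast)=\eps p_2p_3/p_1$ and $z(t^\ast)=-z_k$; the exit point is thus $R(\mathbf p_k)$, so the orbit is reversible and sweeps a total angle $(2k+1)\pi$, i.e. $k$ full turns about the weak canard plus the half-turn of the crossing, matching the labelling of $\gamma_k$ in Proposition~\ref{propcan}(b) (and the strong canard when $k=0$). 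A short monotonicity argument on the drifting centre $x_\ast(t)$, whose $\pm|A|$ excursion reaches $\pm\delta_k$ only at the endpoints, shows the orbit stays in $\{|x|\le\delta_k\}$ throughout, so the transit is legitimate.

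The main obstacle is the remaining step, namely showing that $\mathbf p_k\in L^A_\eps$ as given by \eqref{attrepmandelta}, which guarantees the orbit leaves the attracting slow manifold and, by reversibility, arrives on $L^R_\eps=R(L^A_\eps)$. The difficulty is that $L^A_\eps$ is written through the slow eigenvalue $\lambda_A=-\tfrac{1+\sqrt{1-4\eps p_1}}{2}$, an integer power series in $\eps$, whereas $\delta_k$ and $z_k$ carry the half-integer factor $\sqrt{\eps p_1}$. The clean route is to split the defining equation of $L^A_\eps$ into its half-integer and integer parts in $\eps$ and to use repeatedly the characteristic relation $\lambda_A^2+\lambda_A+\eps p_1=0$, equivalently $\eps p_1=-\lambda_A(1+\lambda_A)$. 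The half-integer part then collapses to the identity $-\eps^{3/2}p_1=\sqrt{\eps}\,\lambda_A(1+\lambda_A)$, and the integer part to $\eps p_1=(1+\lambda_A)-(1+\lambda_A)^2$; both are immediate consequences of that relation, so the membership holds \emph{exactly} rather than only asymptotically. With this identity established, the constructed orbit connects $S^A_\eps$ to $S^R_\eps$ and, being the unique $k$-turning maximal canard under the distinct-flight-time hypothesis of Proposition~\ref{propcan}, is $\gamma_k$, which completes the proof.
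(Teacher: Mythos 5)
Your proposal is correct, and at its computational core it coincides with the paper's Appendix~B argument, but it is run in the opposite direction. The paper proceeds top-down: it recalls from Appendix~A that maximal canards through $(-\delta,y,z)$ are characterised by system~\eqref{eq:max_canards} together with membership in $L^A_{\varepsilon}$, observes that $\cos=-1$, $\sin=0$ is a ``trivial'' solution of~\eqref{eq:max_canards} (which forces $\varepsilon p_2p_3-p_1y=0$, i.e.\ $y=\varepsilon p_2p_3/p_1$, and quantises $z$), deduces $\delta=\delta_k$ from the slow-manifold condition, and only then substitutes into the explicit flow~\eqref{eq:flow_p1_p}. You go bottom-up: you integrate the IVP from the claimed crossing point, note that $y(0)$ equal to the axis value kills the sine component (this is exactly the same cancellation $\varepsilon p_2p_3-p_1y_k=0$ that the paper exploits), verify the exit at $R(\mathbf{p}_k)$ after total angle $(2k+1)\pi$, check containment in $\{|x|\le\delta_k\}$, and prove $\mathbf{p}_k\in L^A_{\varepsilon}$ \emph{exactly} via the characteristic relation $\lambda_A^2+\lambda_A+\varepsilon p_1=0$. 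Your two identities are indeed consequences of $\lambda_A(1+\lambda_A)=-\varepsilon p_1$, and this step is genuinely more explicit than the paper's, which compresses it into ``we conclude that $\delta=\delta_k$''. What the paper's route buys is brevity, since it reuses the Appendix~A machinery (including the invariant-cylinder containment argument); what yours buys is self-containedness and rigour on the slow-manifold membership, at the mild price of invoking the uniqueness hypothesis of Proposition~\ref{propcan} to identify the orbit with $\gamma_k$ --- note that Proposition~\ref{propcan} is stated for $\delta=\pi\sqrt{\varepsilon}$, so strictly speaking you need its (paper-acknowledged) extension to general admissible $\delta$, identification by rotation number being otherwise the same in both arguments.

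One caveat concerning ``coincides term by term'': since $y=-x'$ in the fast-time parametrisation, your solution gives $y_{\mathrm{sc}}(t)=\sqrt{\varepsilon p_1}\,\frac{p_2p_3}{p_1^2}\bigl(\sin(\sqrt{\varepsilon p_1}\,t)+\sqrt{\varepsilon p_1}\bigr)$ and $z_{\mathrm{sc}}(t)=\frac{\sqrt{\varepsilon}\,p_3}{\sqrt{p_1}}\bigl(\sqrt{\varepsilon p_1}\,t-(k+\tfrac12)\pi\bigr)$, which agree with~\eqref{k-y}--\eqref{k-z} as printed only up to a sign slip in the sine term of~\eqref{k-y} and a misplaced $\sqrt{p_1}$ factor on the $k$ term of~\eqref{k-z}; as printed, \eqref{k-y}--\eqref{k-z} are consistent with~\eqref{k-x} and with the stated crossing point only when $p_1=1$ (the value used in Figure~\ref{excan}). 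These are typos in the paper (also present in its Appendix~B), not flaws in your argument, but a written-up version of your proof should derive the corrected expressions rather than claim literal coincidence with the displayed formulas.
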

\begin{proof}
The proof of the proposition is given in Appendix B
\end{proof}\newline

\noindent\textbf{Remark.} The quantity $-\frac{p_2p_3}{p_1^2}$ plays the role of the parameter $\nu$ used to measure the speed of the slow drift in minimal models for folded singularities; see~\cite{DGKKOW}.

Figure~\ref{excan} proposes an illustration of Proposition~\ref{selected} for $k=0$ to $3$ in panels (a), (b), (c) and (d), respectively.

\subsection{Singular flow far away from the origin}
We continue this section by a remark pushing further the comparison between the smooth version and the PWL version of the minimal system~\eqref{3d}, in the singular limit. The singular flow on the attracting part and on the repelling part of the critical manifold, away from a vicinity of the origin, are topologically equivalent in the smooth case and in the PWL case. Indeed, the slow flow of system~\eqref{3d} with $f(x)=x^2$ can be understood from the reduced equation
\begin{eqnarray}
\label{drssmoothxy}
  \begin{array}{l}
      -2x\dot{x} = -(p_1x+p_2z),\\
      \quad\;\;\;\dot{z} = p_3,
  \end{array}
\end{eqnarray}
after performing the usual desingularisation, which amounts to a time rescaling by a factor $-2x$ in~\eqref{drssmoothxy}. One then studies the (regular) DRS and maps its dynamics back to the original reduced system with a necessary reversal of time orientation along trajectories on the repelling part of the critical manifold, that is, in the $\{x>0\}$ half-space. However, one can also consider two systems defined separately on the two half-spaces $\{x<0\}$ and $\{x>0\}$. In this way, the following reduced equation
\begin{eqnarray}
\label{redsmoothxy}
  \begin{array}{l}
    \dot{x} = \sgn(x)(p_1x+p_2z),\\
    \dot{z} = 2p_3|x|,
  \end{array}
\end{eqnarray}
is obtained. Therefore, one can expects that, outside a neighbourhood of the origin, the phase portraits of systems~\eqref{drspwlxy} and~\eqref{redsmoothxy} are topologically equivalent. Simple computations will illustrate this point in the next sections with Figures~\ref{fsadfig1} to~\ref{fsadnodIIfig}.

\subsection{Folded singularities in the PWL setting}
\label{foldsing}
\begin{figure}[!t]
\centering
\includegraphics[width=10cm]{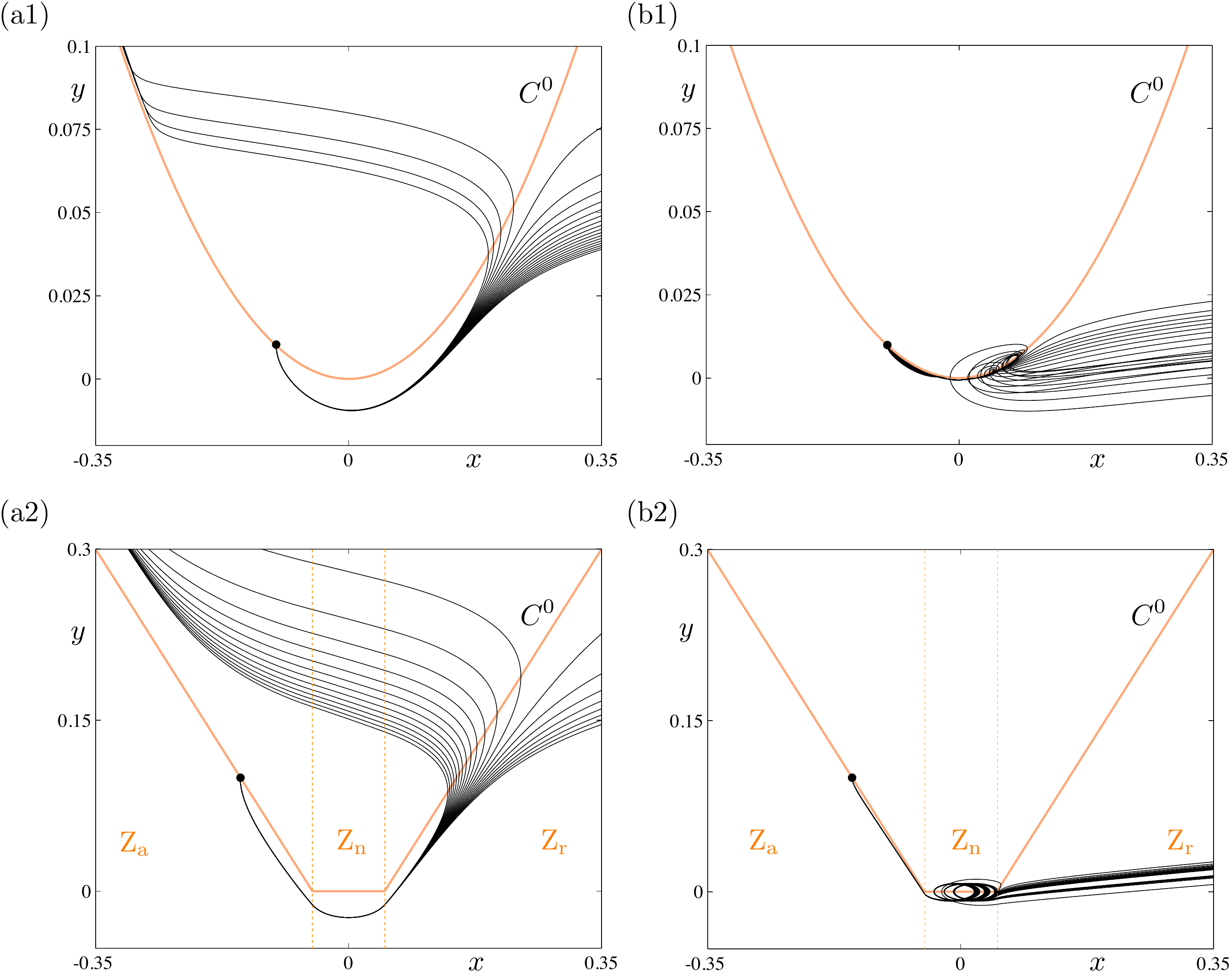}
\caption{\label{fsfn}Canards near folded singularities: folded saddles on panels (ai), i=1,2, and folded nodes on panels (bi). The top panels correspond to system~\eqref{3d} with $f(x)=x^2$, and the bottom panels correspond to the same system with a 3-piece symmetric piecewise-linear function for $f=f_{\delta}$ given in \eqref{pwlfunc}. In each panel are shown 20 trajectories where only the $z$-coordinate of the initial condition is varied; a black dot marks the $(x,y)$-coordinates, which are constant across this sweep. Also plotted is the critical manifold $C^0$ as well as switching planes in the PWL cases, that is, on the bottom panels.}
\end{figure}
The singular canards that we have defined as special directions in the limit $\eps=0$ naturally cross at the origin. We then define this point as the {\it folded singularity for the PWL framework}. In order to classify PWL folded singularities and compare with the smooth cases (folded saddle, folded node, etc..), we first recall the conditions on parameters $p_i,\;i=1,..,3$~\cite{P08} which characterize the different folded singularities in smooth slow-fast systems written in the minimal form~\eqref{3d}:
\begin{itemize}
\item[$\circ$] the {\bf folded-saddle} case corresponds to $p_2p_3>0$;
\item[$\circ$] the {\bf folded-node} case corresponds to $p_1>0$ and $-p_1^2/8<p_2p_3<0$;
\item[$\circ$] the {\bf folded-focus} case corresponds to $p_1>0$ and $p_2p_3<-p_1^2/8$;
\item[$\circ$] the {\bf folded saddle-node} case corresponds to $p_2p_3=0$, with two sub-cases : {\bf type I} ($p_2=0,\;p_3\neq0$) and {\bf type II} ($p_2\neq0,\;p_3=0$).
\end{itemize}
Without loss of generality, we will only consider the case $p_3\geq0$ since similar results are obtained for $p_3\leq0$ by reversing the sign of $p_2$; see~\eqref{drspwlxy}. Besides, we will not consider the folded focus case because it does not create any canards for $\eps>0$~\cite{SW01}. Furthermore, we will not consider the folded node case with $p_1<0$ since only {\it faux canards}~\cite{BCDD81} (that is, trajectories flowing from the repelling side to the attracting side) exist in this case; in particular no MMO dynamics can be obtained near an unstable folded-node singularity. Finally, in the folded saddle case, we will consider both $p_1>0$ and $p_1<0$ since in these cases  the role of the true canard and that of the faux canard are exchanged.

Before engaging into the analysis of the different types of PWL folded singularities, we first verify numerically if the criteria on the parameter values are the same in the PWL setting and in the smooth setting. Taking one parameter set that corresponds to the folded saddle case ($p_1=1$, $p_2=1$, $p_3=0.1$) and one that corresponds to the folded node case ($p_1=1$, $p_2=-1$, $p_3=0.1$), direct simulations show that the observed dynamics in the PWL system and in the smooth system are entirely comparable. The results of the simulations are presented in Figure~\ref{fsfn}. It clearly shows that the qualitative behaviour of system~\eqref{3d} is the same in the PWL setting (with $f=f_{\delta}$ given by \eqref{pwlfunc}) and in the smooth setting (with $f=x^2$). Furthermore, criteria on the main parameters in order to obtain folded node type dynamics or folded saddle type dynamics also seem to be the same in both frameworks. Indeed, the folded-saddle condition for $p_1>0$ imposes the fact that the flow along the rotation axis goes in the opposite direction (see~\eqref{axis-xy}); when $p_1<0$, there is no rotation. The folded-node condition follows from the fact that $\mu$ has to be greater than $1$ in that case (see~\eqref{maxrotnum}). Up to a positive scaling factor equal to $\sqrt{p_1}/8$, this gives the same conditions as in the smooth case. Namely, 
\begin{itemize}
\item[$\circ$] the {\bf folded-saddle} case corresponds to $p_2p_3>0$;
\item[$\circ$] the {\bf folded-node} case corresponds to $p_1>0$ and $-p_1\sqrt{p_1}<p_2p_3<0$;
\item[$\circ$] the {\bf folded-focus} case corresponds to $p_1>0$ and $p_2p_3<-p_1\sqrt{p_1}$;
\item[$\circ$] the {\bf folded saddle-node} case corresponds to $p_2p_3=0$, with two sub-cases : {\bf type I} ($p_2=0,\;p_3\neq0$) and {\bf type II} ($p_2\neq0,\;p_3=0$).
\end{itemize}
We will now make this splitting on parameter sets and verify that the singular phase portraits in the outer zones are compatible in each framework.
\begin{figure}[!b]
\centering
\includegraphics[width=10cm]{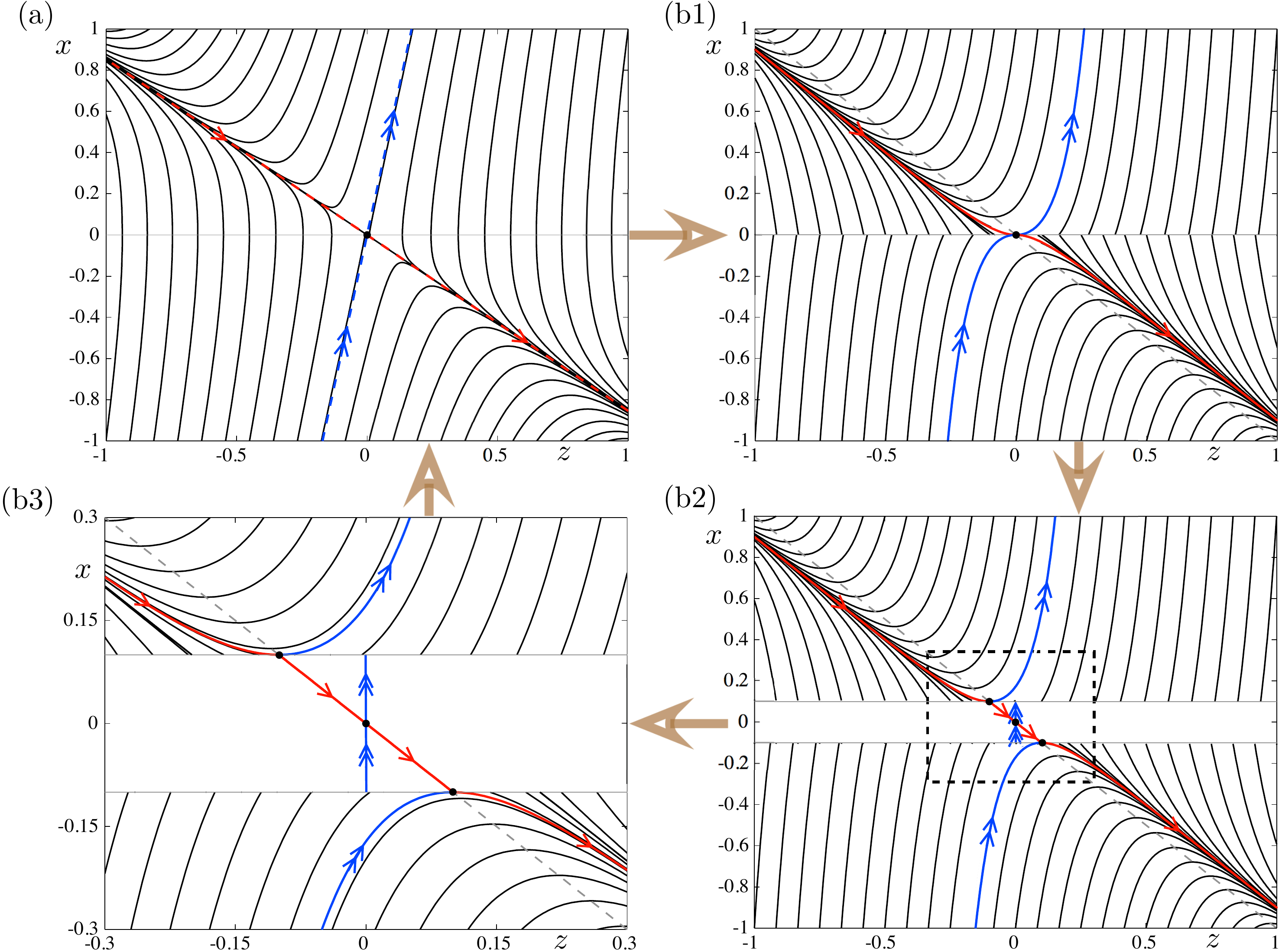}
\caption{\label{fsadfig1}Slow flow near a PWL folded saddle (black dot), for the following parameter values: $p_1=1$, $p_2=1$, $p_3=0.1$. The singular phase portrait of the smooth system for the same parameter values is shown in panel (a). In panel (b1), the slow flow is presented, in its two-zonal configuration. In order to reveal the singular weak canard, one performs an opening with the $\eps=0$ limit of the central zone considered in that blown-up zone; the result is shown panels (b2) and (b3) (zoom). In all panels, the red and blue lines/curves denote the singular faux canard and the singular strong canard, respectively; also, in panels (b1)--(b3), the dashed grey line denotes the $z$-nullcline.}
\end{figure}

\subsubsection{Folded saddle: singular phase portrait}
\label{fsad}
\begin{figure}[!b]
\centering
\includegraphics[width=10cm]{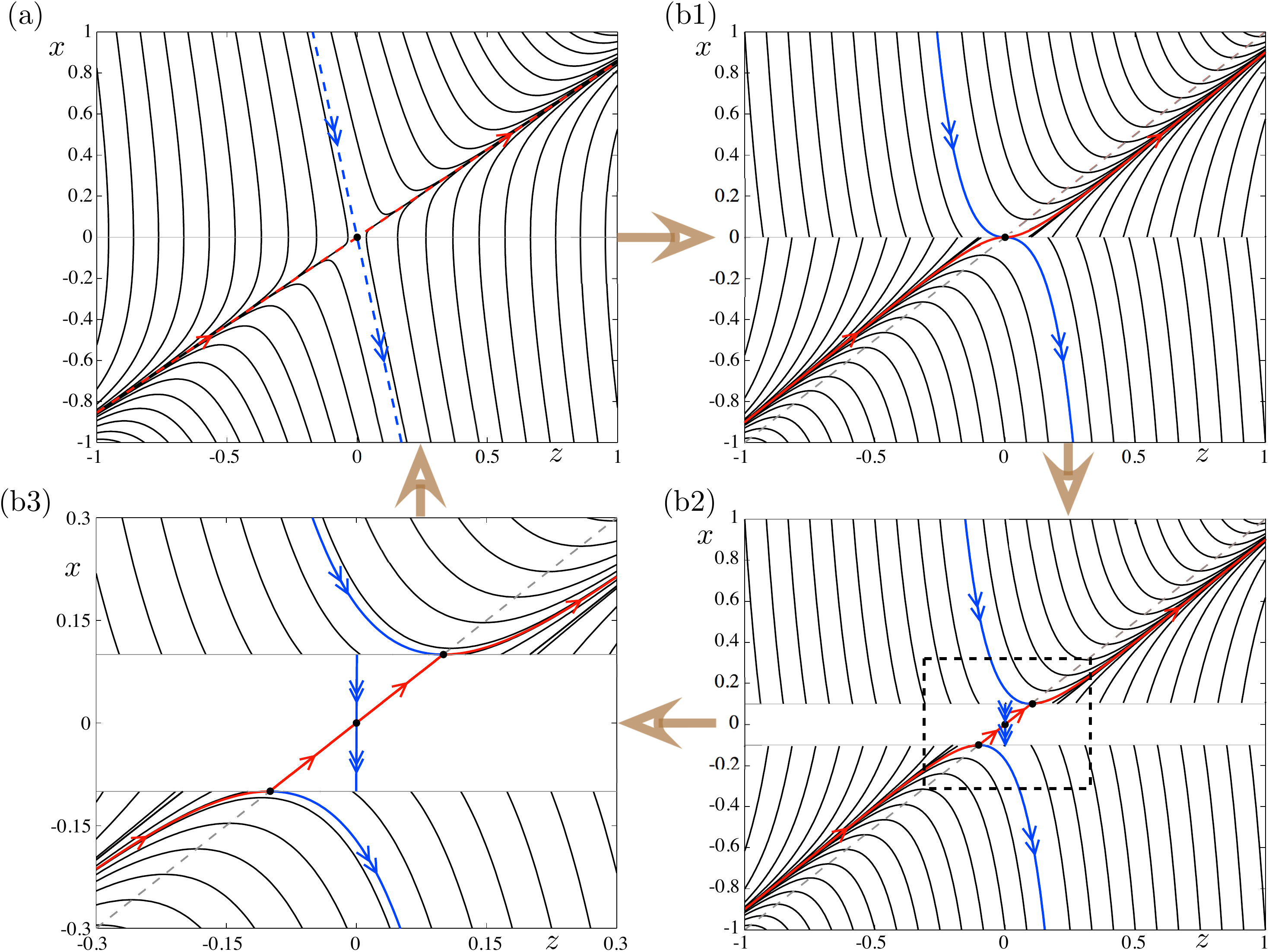}
\caption{\label{fsadfig2}Same figure as~\ref{fsadfig1} but with $p_1=-1$. The roles of the singular true canard and of the singular faux canard are now exchanged.}
\end{figure}
\begin{figure}[!b]
\centering
\includegraphics[width=10cm]{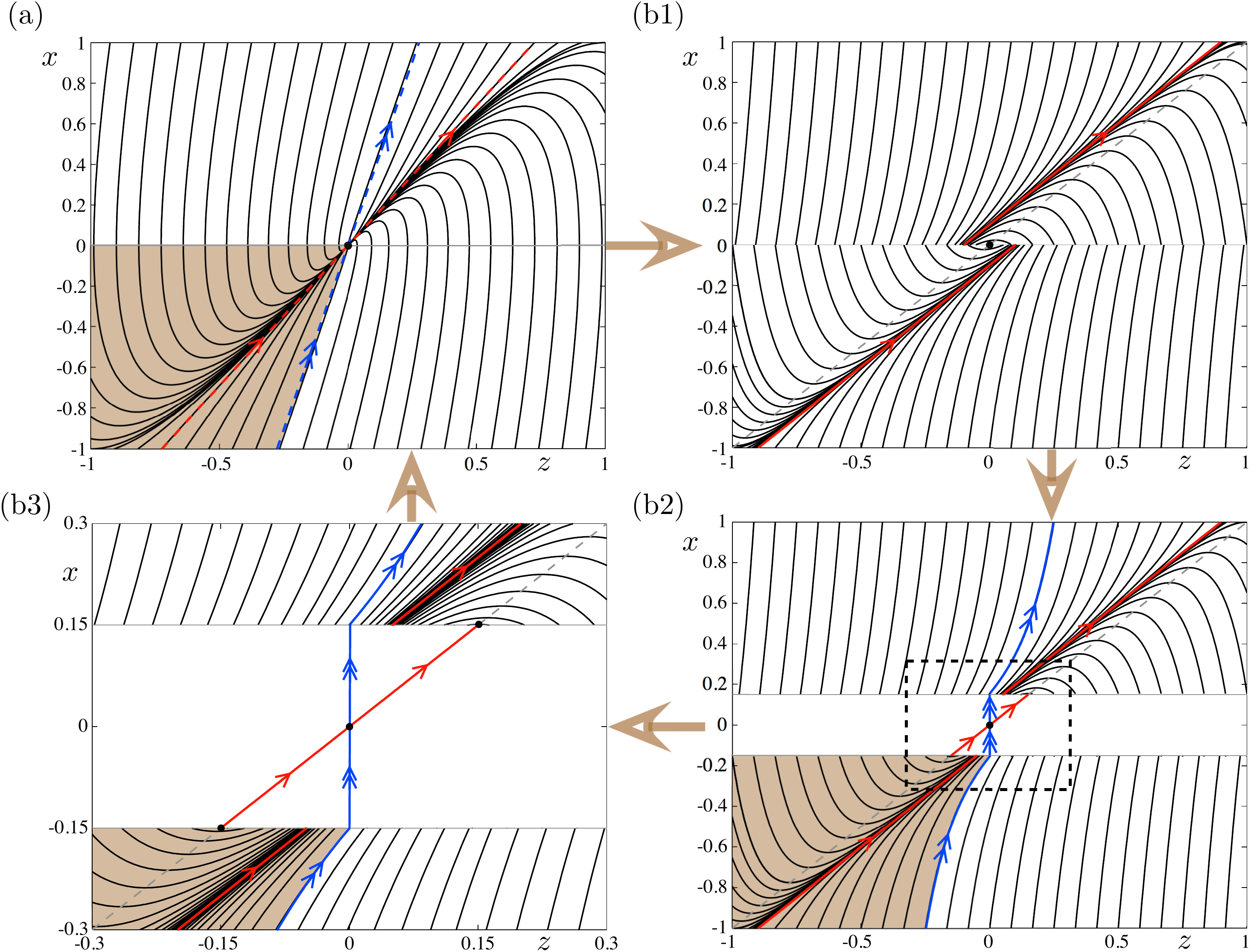}
\caption{\label{fnodfig}Slow flow near a PWL folded node (black dot), for the following parameter values: $p_1=1$, $p_2=-1$, $p_3=0.1$. The singular phase portrait of the smooth system for the same parameter values is shown in panel (a). In panel (b1), the slow flow is presented, in a two-zonal configuration. In order to reveal the singular weak canard, one performs an opening with the $\eps=0$ limit of the central zone considered in that blown-up zone; the result is shown panels (b2) and (b3) (zoom). In all panels, the red and blue lines/curves denote the singular weak canard and the singular strong canard, respectively; also, in panels (b1)--(b3), the dashed grey line denotes the $z$-nullcline.}
\end{figure}

Consider system~\eqref{drspwlxy} with parameters satisfying $p_2p_3>0$. It is easy to check that the half straight line given by 
\begin{eqnarray}\label{hsl1}
p_1x+p_2z &=& -\frac{p_2p_3}{p_1}
\end{eqnarray}
is invariant under the flow in the $\{x>0\}$ half-plane. Similarly, the half straight line given by 
\begin{eqnarray}\label{hsl2}
p_1x+p_2z &=& \frac{p_2p_3}{p_1}
\end{eqnarray}
is invariant under the flow in the $\{x<0\}$ half-plane. Furthermore, the $x$-nullcline, $p_1x+p_2z=0$, is the locus of tangency points in the direction of $z$. Therefore, in the two-zonal system at $\eps=0$ (that is, without the opening), the origin is a so-called {\it visible-visible tangency point}~\cite{JH11} of the flow; see panel (b1) in Figure~\ref{fsadfig1}. The direction of the flow is easily deduced from the $z$ equation in~\eqref{drspwlxy}. This panel shows that the behaviour of the slow flow is equivalent, outside a neighbourhood of the origin, to that of the smooth system, presented on panel (a). 

However, this description of the singular flow does not provide any information about the crossing from one zone to the other. As we explained in the previous section, such crossing through singular canards can only be revealed by maintaining the central zone open in the singular limit, which corresponds to performing a blow-up. This behaviour is shown in panels (b2) and (b3), the latter being a magnified view of the former near the origin. The red and the blue orbits in the central zone are the singular weak and strong canards defined in the previous section. The blow-up splits the double tangency point at the origin into two tangency points that lie along the $x$-nullcline, which corresponds to the direction of the singular weak canard in the central zone. The resulting tangency point in each outer zone determines a separatrix that gives one side of each singular canard in that zone. In the central zone, the singular weak and the singular strong canards are the blown-up images of the two separatrices meeting at the origin in the two-zonal system (panel (b1)). Note that after the blow-up is performed, the resulting singular weak canard is connected across all three zones whereas the singular strong canard is disconnected. However, both objects converge in the two-zonal limit to the two separatrices which are tangent to the corner line at the origin. In fact, this process corresponds to a concatenation of the real singular flow together with singular directions obtained in the central part after blow-up, which are necessary to identify the canards. In this way, we are able to compare the singular phase portrait near the PWL folded singularity (panel (b3)) with that of the smooth case (panel (a)).

The only difference between Figure~\ref{fsadfig1} and Figure~\ref{fsadfig2} is the sign of $p_1$, positive in the former and negative in the latter. The role of the sign of $p_1$ is to determine which singular canard is a true canard and which one is a faux canard. In particular, when $p_1$ is negative, the singular strong canard happens to be a faux canard. This fact also happens in the smooth context although it is not often commented about in the literature. However, the identification of the singular canards does not depend on the sign of $p_1$.

\begin{figure}[!b]
\centering
\includegraphics[width=10cm]{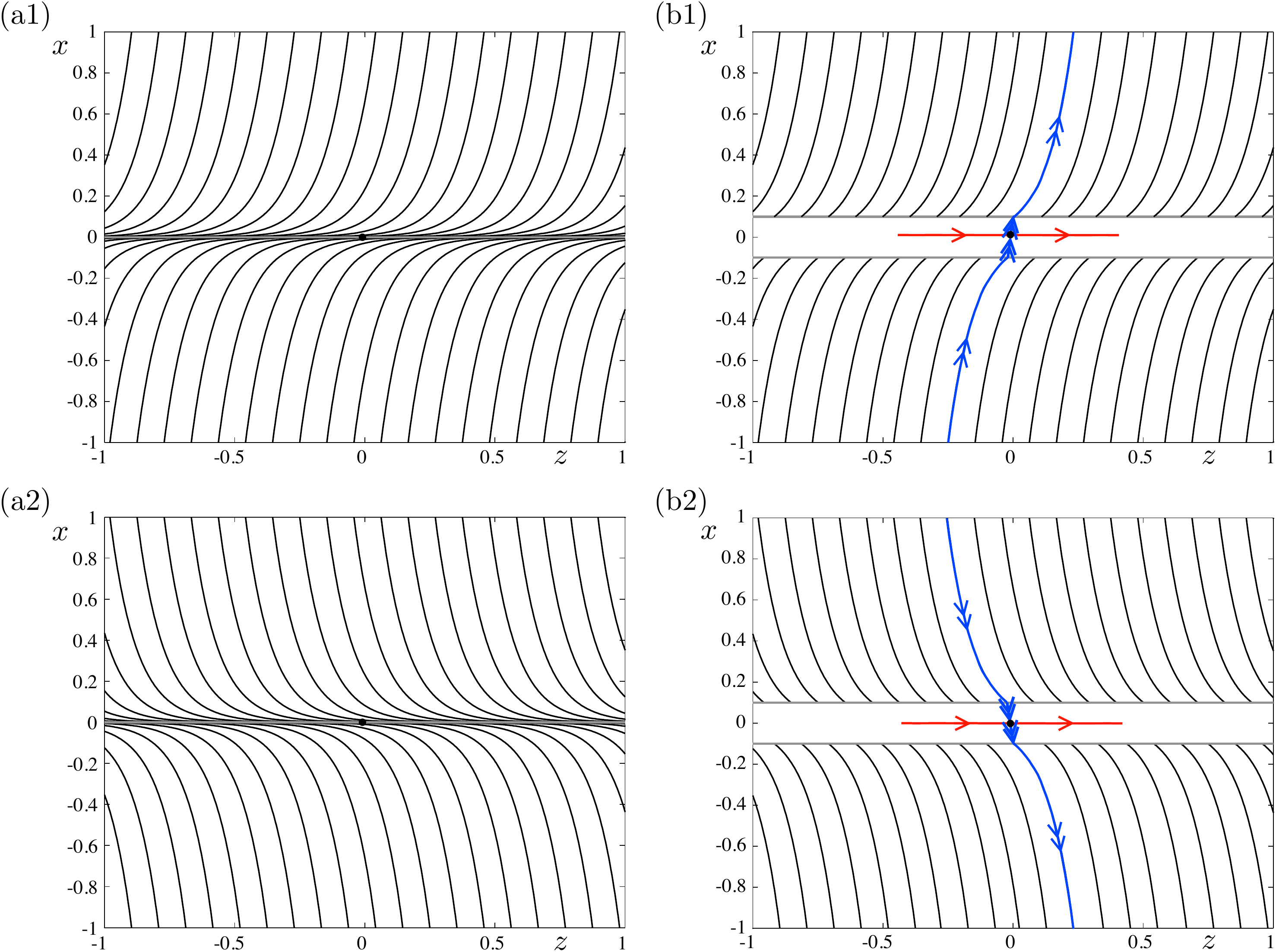}
\caption{\label{fsadnodIfig}Slow flow near a PWL folded saddle-node, corresponding to $p_2p_3=0$. The case shown here is in fact the folded saddle-node of type I (FSNI), that is, $p_2=0$: (a) with $p_1>0$ and (b) with $p_1<0$. The slow flow is shown. In the folded saddle-node limit, the weak canard disappears as a canard, that is, as a way to pass from one side to the other.}
\end{figure}
\subsubsection{Folded node: singular phase portrait}
\label{fnod}
The singular flow in the folded node case, that is, when $p_2p_3<0$, is obtained in a very similar manner as in the folded saddle case. In particular, the two half-straight lines~\eqref{hsl1} and~\eqref{hsl2} are still invariant under the flow in the respective zone. The locus of tangency points also persists in this case, however the origin now corresponds to an invisible-invisible tangency, usually referred to as a {\it Teixeira singularity}~\cite{JH11,PTV14}; see Figure~\ref{fnodfig}. Here again, the singular phase portrait in the two-zonal configuration (panel (b1)) does not allow to find the possibilities of crossing from one zone to the other. After performing the central blow-up, we find the two directions that realise this crossing, corresponding to the singular weak and strong canards. In order to determine their extensions to the outer zone, we also use the role of singular canards as separatrices. Indeed, the singular weak canard separates trajectories that go towards the fold line, 
either in forward time or in backward time, from those that go to infinity. On the other hand, the singular strong canard separates trajectories that cross to the other side from those that do not; it defines one boundary of the {\it funnel} (colored area in Figure~\ref{fnodfig}). See panel (a) for details about the smooth folded node case. Consequently, we define the singular weak and strong canards in the outer zones in the same way. We remark that the central blow-up in this case is more subtle than in the folded saddle case, in the following sense : it separates not only the double tangency point at the origin into two tangency points, but also the intersection points between the singular canards and the blown-up region. We can then identify the funnel region in the PWL setting; see panels (b2) and (b3). 

Note that in the PWL folded node scenario, the singular strong canard is connected across all three zones (which is consistent with its role of boundary of the funnel region), whereas the singular weak canard is not. The situation is opposite to that of the PWL folded saddle case.

\subsubsection{Limit cases: folded saddle-node transitions}
\label{fsadnod}
The singular phase portraits of the folded saddle-node of type I (FSNI), that is, for $p_2=0$, are presented in Figure~\ref{fsadnodIfig}: the top panels display the case $p_1>0$ and bottom panels the case $p_1<0$. We can immediately notice that these two phase portraits are compatible with those on Figure~\ref{fsadfig1} and~\ref{fsadfig2} in the limit $p_2=0$, respectively. Indeed, in that limit, each tangency point on the boundary of the blown-up region converges to infinity. It is easy to see from equations~\eqref{axis-xy} that the axis of rotation at $\eps=0$ converges to the horizontal axis $x=0$ in the limit $p_2=0$. Then, this segment does not connect one side of the critical manifold to the other, which shows that the singular weak canard does not exist as a canard in the FSNI.
\begin{figure}[!b]
\centering
\includegraphics[width=12cm]{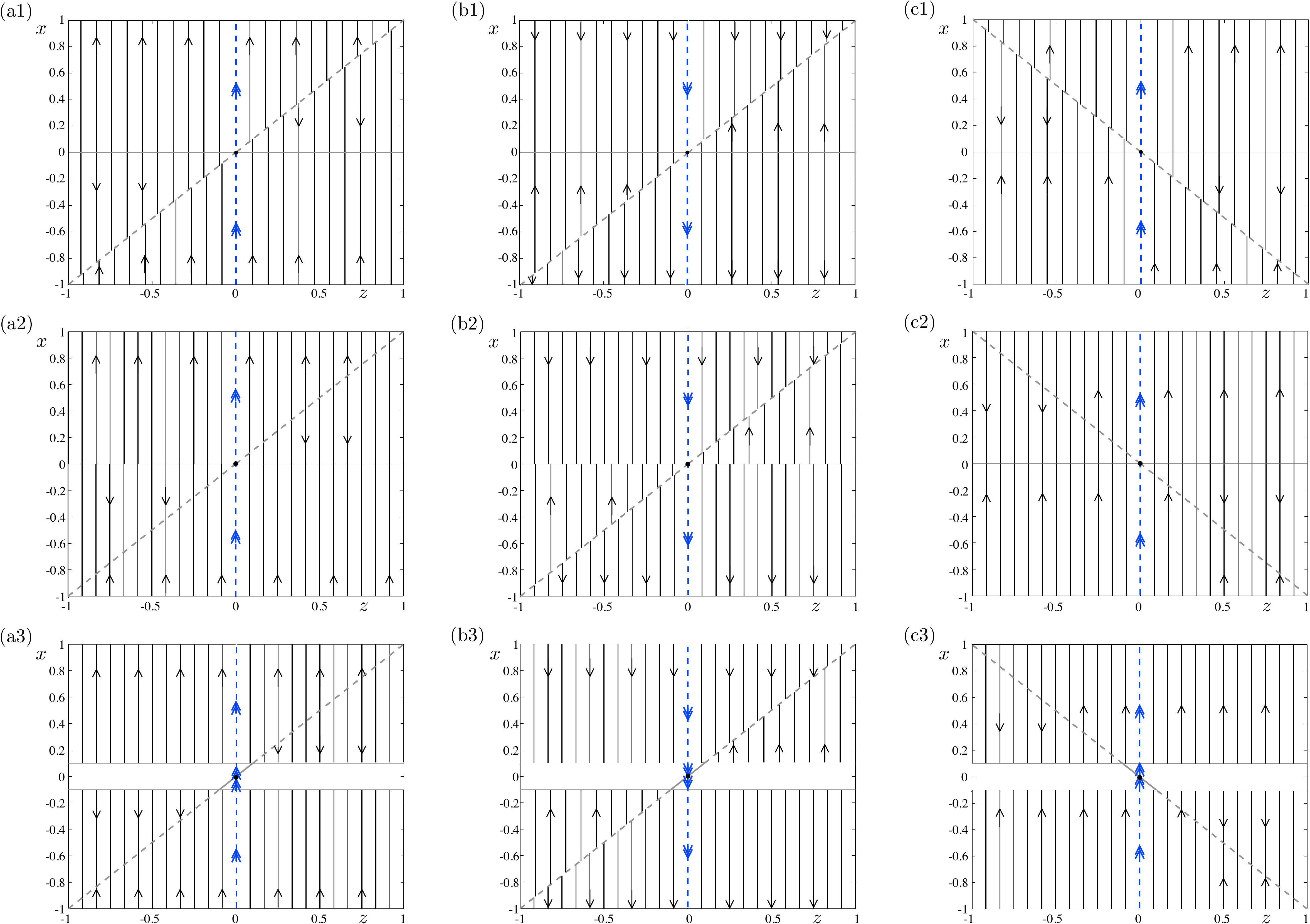}
\caption{\label{fsadnodIIfig}Slow flow near a PWL folded saddle-node, corresponding to $p_2p_3=0$; the case shown here is in fact the folded saddle-node of type II, that is, $p_3=0$. Top panels show the smooth singular phase portraits; middle panels show the PWL singular phase portraits; bottom panels show the PWL singular phase portraits with the opening performed. However, no information can be obtained from that opening since $z=constant$. Hence, no connection is possible between the attracting zone and the repelling zone. This is consistent with the fact that the weak canard disappears in the folded saddle-node limit. In all panels, the blue lines correspond to the singular strong canard. The dashed grey line corresponds to the $z$-nullcline, which in this case is a line of equilibria.}
\end{figure}
This fact is well-known in the smooth context~\cite{krupa10}. Regarding the singular strong canard, we can construct it in a similar fashion as in the 
previous cases of folded saddle and folded node. Namely, 
performing the 
central blow-up and using formula~\eqref{def:maximal_canards} allows to define the direction corresponding to the singular strong canard in this zone: $(\frac 1 2\frac{p_3}{\sqrt{p_1}},1)$. This particular direction, going through the origin, hits the two boundaries of the blown-up region in one point, from which passes a unique trajectory solution to the system in the corresponding outer zone; see panels (b1) and (b2) of Figure~\ref{fsadnodIfig}.\newline
Note that the phase portrait of the FSNI in the smooth case cannot be completely obtained from the canonical form; see the remark made at the end of Section 3.1.1 of~\cite{P08}. Indeed, one needs to incorporate higher-order terms in one slow equation of the canonical form; for a detailed analysis of this case, see~\cite{krupa10}. Therefore, we need to extend accordingly the PWL canonical form in order to obtain the complete singular phase portrait near a FSNI. Presumably, it would be convenient to include an absolute-value term also in this slow equation. This is beyond the scope of the present paper and will be the subject of future work. 

In the case of folded saddle-node of type II (FSNII), that is, for $p_3=0$, according to the slow flow given by equations~\eqref{drspwlxy}, the dynamics is given by a linear flow for the variable $x$ along the lines $\{z=\mathrm{constant}\}$. Furthermore, $\{x=-\frac{p2}{p1}z\}$ is now a line of equilibria of the slow flow, the stability of these equilibria depending on $z$ and $p_1$; see Figure~\ref{fsadnodIIfig} panels (a1)-(a3) for the case $p_1>0,\;p_2<0$, panels (b1)-(b3) for the case $p_1<0,\;p_2>0$, and panels (c1)-(c3) for the case $p_1>0,\;p_2>0$. The top panels show the singular phase portraits for the smooth system; the central and bottom panels show the singular phase portraits for the PWL system without and with the opening, respectively. Similar to the smooth case, we obtain that the singular weak canard disappears as a direction of travel from one side to the other, given that all dynamics disappear in the central zone. However, it persists as a line of equilibria, which also 
corresponds to the limit for $\eps=0$ of the axis of rotation. The singular strong canard persists in the same way as in the smooth case, that is, not as a trajectory that passes from one side to the other but as a trajectory that converges to or diverges from a bifurcation point (the FSNII) depending on the zone; see panels (a1)-(c1) of Figure~\ref{fsadnodIIfig}.

Consequently, we have now shown that for all cases giving rise to persistent canards (i.e., all cases except folded focus), we can construct singular phase portraits and define singular canards across all three zones, in a way that is entirely compatible with the smooth case.

\subsection{Canards for $\boldsymbol{\eps>0}$}
We now gather informations about the canard solutions that perturb from the singular canards studied in the previous section. We will first focus on canards near a folded node since this is the most relevant case to study mixed-mode oscillations. Then, we will reveal a novel aspect of canard dynamics near a folded saddle. 

\begin{figure}[!b]
\centering
\includegraphics[width=11cm]{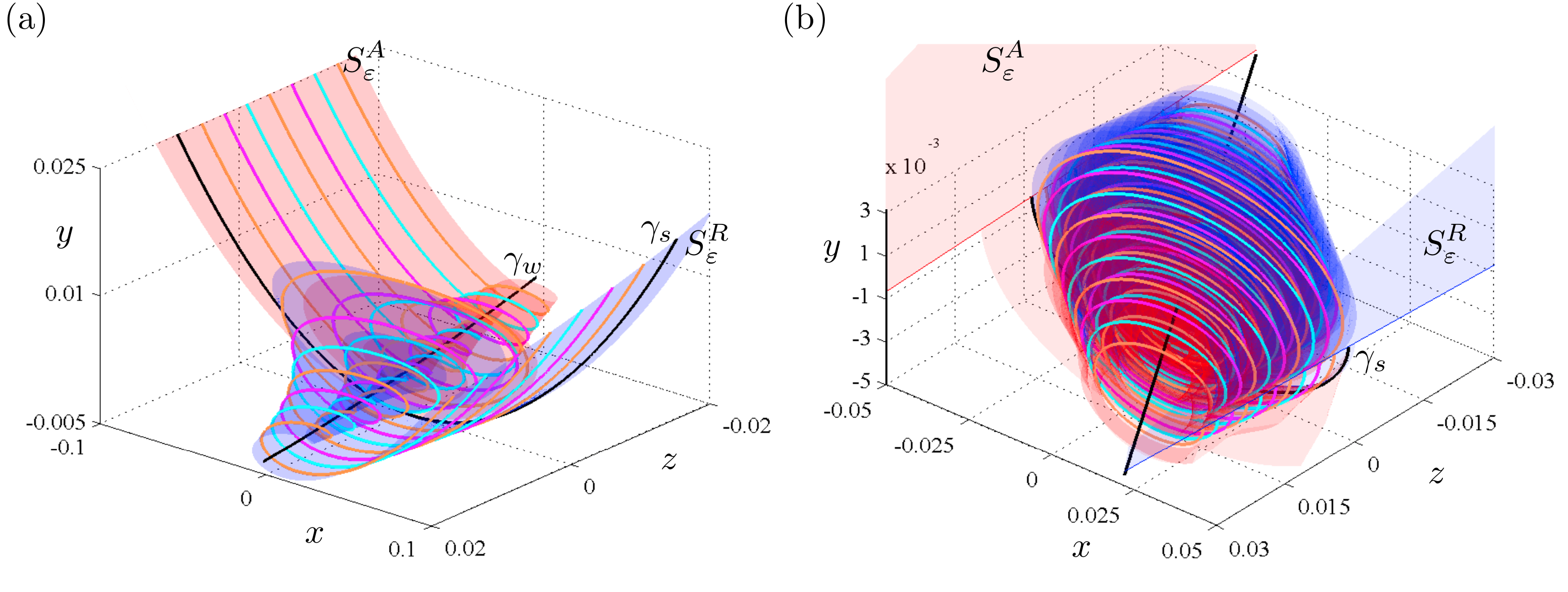}
\caption{\label{slowmancan}Attracting slow manifold $S^a_{\eps}$ (red) and repelling slow manifold $S^r_{\eps}$ (blue), and secondary canards (coloured orbits) near a folded node. Panel (a) shows the smooth case, and panel (b) shows the PWL case. Also shown are the strong canard $\gamma_s$ and the weak canard $\gamma_w$ (not shown on panel (b) since it is not a maximal canard in the PWL, simply the axis of rotation).}
\end{figure}
\subsubsection{Primary and secondary maximal canards near a folded node}
\label{seccan}
All the orbits $\gamma_k$ defined in Proposition \ref{propcan} correspond to maximal canards, i.e. orbits flowing from the attracting slow manifold to the repelling one. The value of $k$ for these solutions is directly related to the number of rotations that they complete in the central zone, and this number is bounded by $\mu$, see~\eqref{maxrotnum}. A theoretical analysis about the bifurcation of the maximal canards near a PWL folded node is an on-going work and will be 
the subject of a follow-up paper. Figure~\ref{slowmancan} is a very clear illustration of the great degree of similarity between the dynamics near a folded node (slow manifolds, canards) in the smooth case (panel (a)) and in the PWL case (panel (b)). However, the PWL framework offers a surprising result, namely the fact that the weak canard, defined as the axis of rotation of the system in the central zone for $\eps>0$, is not a maximal canard. The reason is simply that the axis of rotation connects to the repelling side at a distance of $O(\eps)$ from the repelling slow manifold. This is easy to check using the equations~\eqref{axis-xy} of the axis of rotation, and the equations~\eqref{attrepmandelta} of the intersection lines of the slow manifolds with the switching planes. In order to be a maximal canard, the axis of rotation would need to exactly connect to the repelling slow manifold. We do not see this unexpected result as a problem, but rather as a question to be revisited in the smooth theory.

\subsubsection{SAOs near a folded saddle}
\label{fsadsaos}
\begin{figure}[!t]
\centering
\includegraphics[width=10cm]{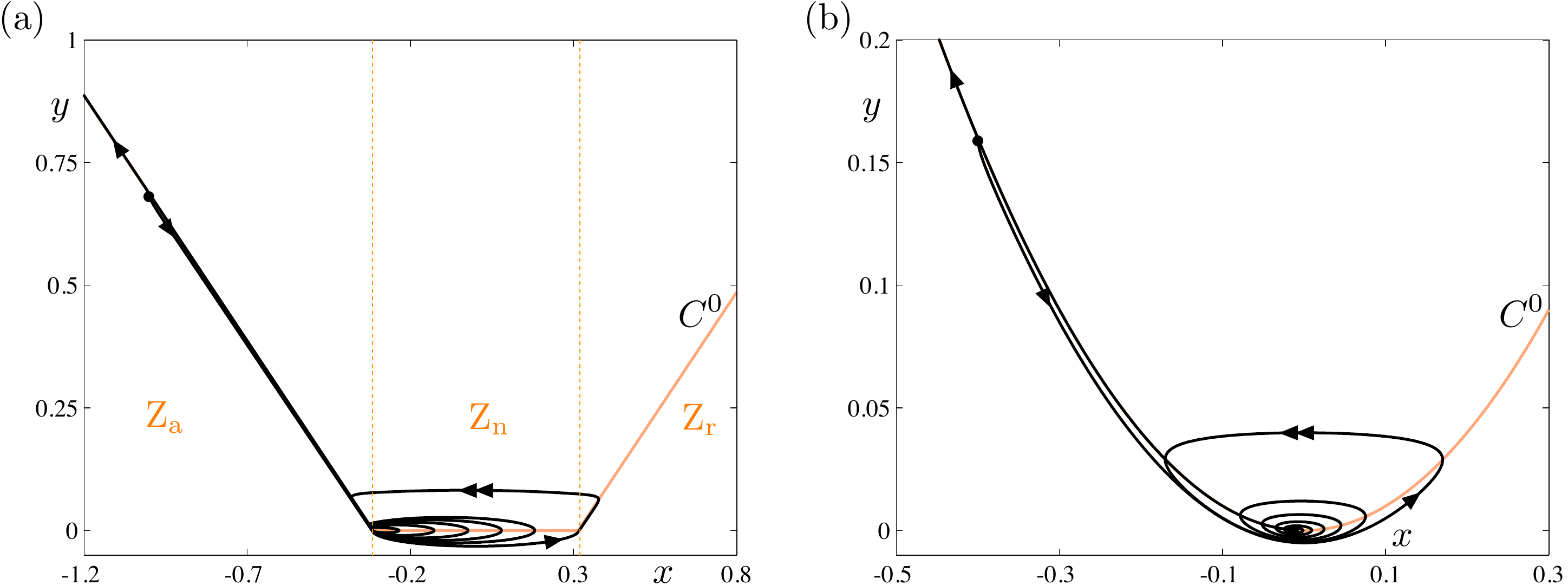}
\caption{\label{fsadsaosfig}Transient small oscillations near a folded saddle; PWL case in panel (a), smooth case in panel (b).}
\end{figure}
Our strategy to construct PWL slow-fast systems with folded singularities, and associated canard solutions, is very much inspired from that of Arima {\it et al.}~\cite{ARIMA97}, that is, it relies on introducing a central zone in between the attracting and the repelling sides of the critical manifold. Furthermore, we impose that the linear dynamics in that central zone has purely imaginary eigenvalues in order to ensure spiralling motion in this zone. Now, it is immediate to see that the structure of the central dynamics, where the phase space is foliated by invariant cylinders and every solution spirals around a common axis of rotation, does not depend on the values of the parameters $p_1, p_2$ and $p_3$; however, $p_1$ must be positive for rotation to occur, as mentioned in Section~\ref{genfsing}. In particular, one can expect to find trajectories displaying small oscillations also near a PWL folded saddle and not only near a PWL folded node; see Figure~\ref{fsadsaosfig} (a) for an example of 
such a trajectory near a PWL folded saddle. This phenomenon could at first appear as a shortcoming of the PWL framework, but in fact it is the exact 
opposite: it is a general fact, which also exists in the smooth case and can be naturally understood using the PWL setting. As an illustration, we show in Figure~\ref{fsadsaosfig} (b) an example of trajectory displaying SAOs near a smooth folded saddle. To the best of our knowledge, this fact is not yet discussed in smooth literature and it is being currently investigated in an independent manuscript in preparation~\cite{mwinprep}. This gives a good example of how the PWL framework not only reproduces all the richness of the dynamics present in the smooth case, but can also provide new information about the smooth case, at least suggest revisiting it. Note that SAOs near folded saddles are obviously less interesting than SAOs near folded nodes, given that one would need a non-smooth global return mechanism (reset, discontinuous jump, etc..) in order to construct MMOs using such trajectories.

\section{Three-dimensional PWL slow-fast systems: global return and robust MMOs}
\label{pwlmmo}

In the previous section we have analysed the local dynamics, that is, we have identified (using the singular blow-up) the folded singularities in the minimal three-dimensional PWL slow-fast systems~\eqref{3d} with the function $f$ given by~\eqref{pwlfunc}. The natural next step is to verify that we can map entirely also the global dynamics that occurs in the smooth case with that from the PWL case. Indeed, our long-term goal is to construct and analyse PWL models displaying robust canard-induced MMOs. In this paper, we only want to showcase that one can easily construct a minimal PWL model displaying MMOs near a folded node.
The way to achieve this is in two steps. First, adding a fourth zone to the minimal system~\eqref{3d}-\eqref{pwlfunc}, in order to allow for large-amplitude oscillations; this corresponds to considering the PWL function $\widetilde{f}_{\delta}$ defined by
\begin{equation}\label{pwlfunc4min}
   \widetilde{f}_{\delta}(x) = \left\{
     \begin{array}{ll}
       -x-\delta  & \mathrm{if}\quad  x  \leq -\delta,\\
        \;\;\;0    & \mathrm{if}\quad |x| \leq  \delta,\\
        \;\;\;x-\delta  & \mathrm{if}\quad  \delta<x<x_0,\\
        -x+2x_0-\delta & \mathrm{if}\quad  x  \geq x_0.
     \end{array}
   \right.
\end{equation}
Then, we just need to add linear terms to the $z$ equation in order to obtain a global return mechanism. The rationale behind this construction is very much inspired from the construction of the four-dimensional model presented in~\cite{desroches13} and displaying Mixed-Mode Bursting Oscillations (MMBOs). The resulting model is then simply formed by equations~\eqref{3d}-\eqref{pwlfunc4min} where we append to the $z$ equation linear terms in $x$, $y$ and $z$ in order to create a global return; its equations are given by:
\begin{eqnarray}
\label{pwlgloret-xyz}
  \begin{array}{l}
    \eps\dot{x} = - y + \widetilde{f}_{\delta}(x)\\
    \;\;\dot{y} =  p_1x + p_2z\\ 
    \;\;\dot{z} =  p_3+\alpha_1(x-\kappa)+\alpha_2(y-\zeta)+\alpha_3(z-\xi).
  \end{array}
\end{eqnarray}
\begin{figure}[!t]
\centering
\includegraphics[width=11cm]{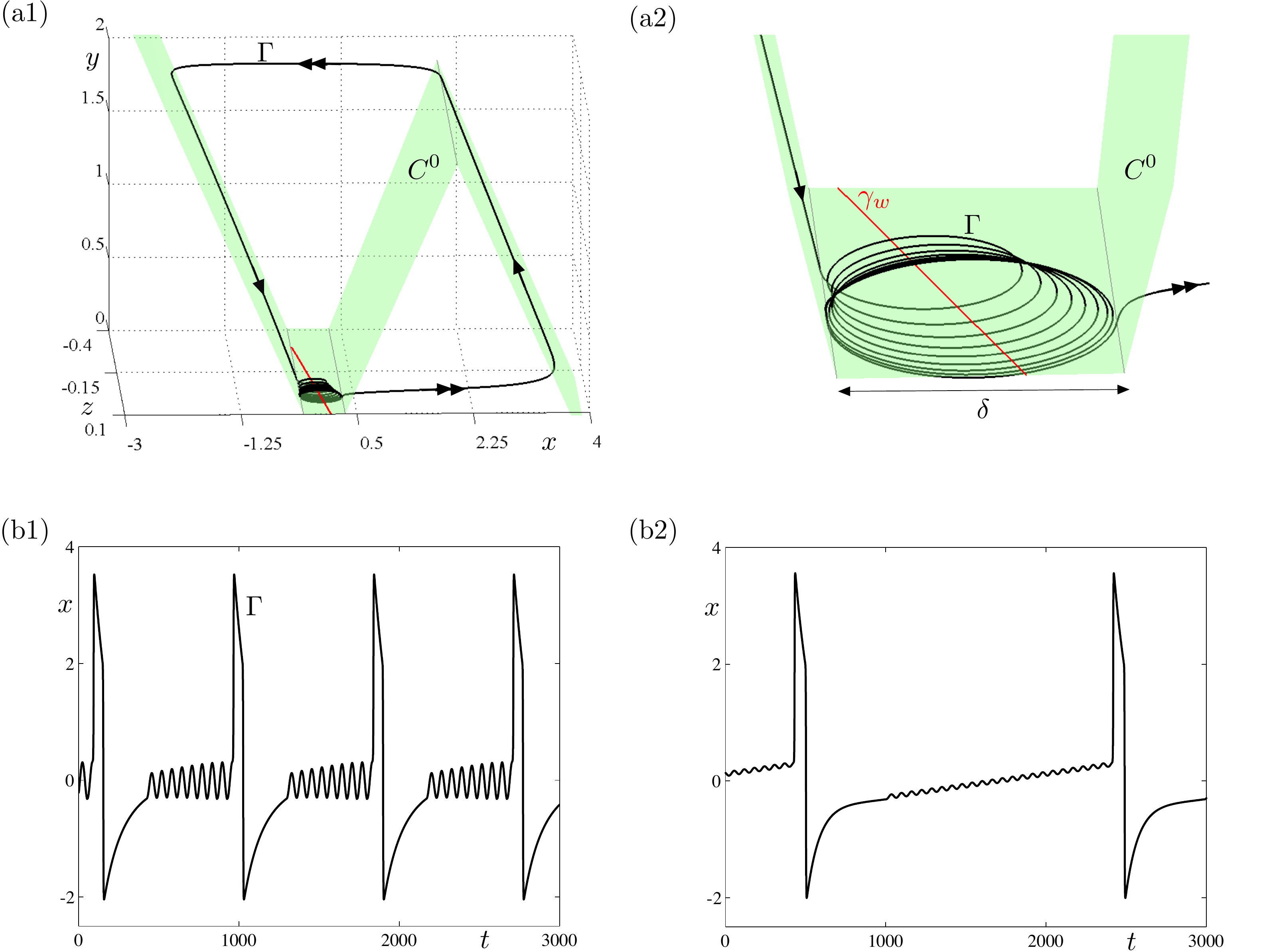}
\caption{\label{mmofig}Periodic PWL MMO $\Gamma$ near a folded node. Panels (a1) and (a2) show a phase-space representation of $\Gamma$ together with the 4-piece PWL critical manifold $C^0$; panel (a2) is a zoom of panel (a1) near the central flat zone, highlighting the SAOs. Panel (b1) shows the time profile of $\Gamma$ for the fast variable $x$. Panel (b2) shows a similar MMO obtained by imposing conditions~\eqref{cond12}, that is, with SAOs having a constant amplitude.}
\end{figure}

An example of periodic MMO obtained with the extended model~\eqref{pwlgloret-xyz}--\eqref{pwlfunc4min} is shown in Figure~\ref{mmofig}. System~\eqref{pwlgloret-xyz}--\eqref{pwlfunc4min} can be seen as a PWL phenomenological neuron model displaying canard-induced MMOs, similar to three-dimensional versions of the FitzHugh-Nagumo model studied, e.g., in~\cite{W05}.
Note that the MMO in this model have SAOs with increasing amplitude as the trajectory travels through the central zone. This is simply due to the fact that the eigenvalues in the central zone have non-zero real part because of the new terms in the $z$-equation and the fact that we have chosen the values of the $\alpha_i$'s without particular constraints. However, by imposing constraints on the $\alpha_i$'s and the $p_i$'s, we can obtain MMO with small oscillations of constant amplitude, namely 
\begin{eqnarray}
\label{cond12}
  \begin{array}{r}
    \alpha_1=\alpha_3 = 0,\\
    p_2\alpha_2 < 0.
  \end{array}
\end{eqnarray}

It is well-known that, in the smooth case, the small oscillations of MMOs near a folded node do not typically have constant amplitude. Rather, the amplitude first decreases and then increases past the folded node. This behaviour can also be captured in a PWL model by splitting the central part of the critical manifold into two zones and putting a dynamics with complex eigenvalues with negative real part in the first zone and positive in the second (the return does not affect this issue). It is important to notice that the qualitative behaviour and an important part of the quantitative one near PWL folded singularities is obtained with the minimal system that we have considered. Adding more zones in the central part would only be useful in order to refine the shape of the SAOs and possibly to fit data. In other words, we can update the minimal model in order to capture more information from the smooth case, such as the shape of the SAOs near a folded node. 

A natural extension of the present work will be to study the model with return \eqref{pwlgloret-xyz}-\eqref{pwlfunc4min} analytically. In particular, by looking at the structure of periodic MMOs in this model using return maps. Among other things, we hope to prove that the possible MMO patterns in a PWL folded node follow a similar Farey arithmetic as in the smooth case.

\section{Conclusion}
\label{conclusion}
In this paper, we have presented new advances in the study of three-dimensional PWL systems with multiple timescales displaying canard solutions. This is recent research topic and we manage to obtain a complete comparison with the smooth case at the level of the local dynamics near folded singularities. The necessity of using a central zone was introduced in Arima {\it et al.}~\cite{ARIMA97} as a way to obtain the correct eigenvalue transition near the fold. We now interpret it in a singular perturbation fashion as a blow-up. This blow-up, whose optimal size can be determined in the PWL framework, proves to be the key to understand the connection from the attracting slow manifold to the repelling one, that is, the possibility for canards to exist. Note that with other unfoldings of the singular limit $\eps=0$ one cannot reproduce the local dynamics near folded singularities; see for instance~\cite{PT13,PTV14} where canard solutions can be found but the link with folded singularities is lost.
Another important result presented in this paper is the necessity for a central zone maintained open in the singular limit. Indeed, the blow-up of the corner vanishes when $\eps=0$ but we need to maintain the central zone artificially open in order to understand the possibilities of passage from one side of the critical manifold to the other, that is, reveal the PWL folded singularities. In this way, we also define the singular weak and strong canards as directions, which echoes their main role in the smooth case, namely being the eigendirections of the folded singularity. Finally, we construct a simple model displaying periodic canard-induced MMOs near a PWL folded node, by using the analysis done on the local dynamics and adding a linear global return. This opens the way to studying MMOs in the context of PWL slow-fast systems.

Using the PWL framework, we reproduce all the dynamics from the smooth case, both qualitatively and quantitatively. In particular, we obtain compatible singular phase portraits and control the maximum winding number, that is, the number of secondary canards. What is more, we also suggest elements that naturally appear in the PWL setting and which could allow to revisit the smooth case. In particular, we mention the role of the axis of rotation, that is, the weak canard, which is not a maximal canard in the PWL framework, and the possibility for SAOs near a folded saddle.

From this work, two main perspectives about using a PWL setting for slow-fast systems can be emphasized. First, a theoretical perspective on revisiting singular perturbation theory with canard dynamics using PWL flows and obtaining a simplified version of it. Indeed, as we show in the context of three-dimensional canard problems, all the important dynamics is preserved, and several key objects exist in a clearer manner. In particular, there is a natural way to defined uniquely a slow manifold and maximal canards. Furthermore, the PWL framework allows by construction to separate the SAOs that are purely linear, that is, those that stay in the central zone, from the last SAO which has a nonlinear behaviour as it explodes when passing into the right zone and following a (maximal) secondary canard. There is a great degree of similarity with the smooth case where most of the SAOs can be understood through a linearisation of the dynamics along the axis of rotation (weak canard) and a reduction to the Weber 
equation, whereas the last SAO undergoes a canard explosion and therefore behaves in a very nonlinear way. So, to paraphrase M. Diener in~\cite{diener84}, the natural {\it biotope} of canards is that of PWL vector field, at least it is the simplest environment in which one can understand them, in which the essence of canard dynamics is preserved and anything else is dropped.

The second perspective that this work suggests is in the direction of neuronal modelling. PWL models of spiking neurons have long been developed and successfully analysed~\cite{mckean70,DK04}. In parallel, neuron models featuring more elaborate behaviour, in particular alternance of sub-threshold oscillations and spikes, have also been developed and, more recently, slow-fast ODEs have been increasingly used to construct such models, the role of folded nodes and canards being pointed out and thought as central. PWL neuron models with canards have gained recent interest~\cite{SK11,RCG12,DFHPT13,FG14} but the link to more elaborate models with canards and folded singularities were missing. With this work, we bridge this small yet important gap by  constructing the correct local dynamics that allows to design a folded node in a three-dimensional PWL system. 

Another important class of neuronal behaviour that is well captured by smooth slow-fast models is bursting. A number of smooth models have shown the importance of canards in bursters, in particular related to spike-adding phenomena. It is then a natural next step to also try to reproduce and revisit these models using PWL systems; there has been already some work in that direction~\cite{DK04} but nothing related to canards. The current work gives a good basis to tackle this problem.

Our short-term objective is to take the PWL approach for neuron models at the level of smooth ODEs which, hopefully, will bring simplified yet accurate models, numerically more tractable and also more amenable to analysis and control. In other words, in the near future we aim to build up biophysical PWL models of neurons, in particular to obtain complete PWL versions of conductance-based neuron models, e.g. the Hodgkin-Huxley (HH) model. Previous works exist in this direction~\cite{D97,DR03}, 
mostly relying on planar two-piece local systems with a global return mechanism. However, a three-dimensional reduction of the HH model (amongst many other examples) is known to produce MMO dynamics~\cite{RW08} due to the presence of an $S$-shaped critical manifold and a folded node. Therefore, we plan to construct a three-piece local system to approximate the voltage nullcline of this 3D reduction of the HH model. Moreover, the differential equations for the gating variables can be made PWL by replacing sigmoidal activation functions by PWL ones (an example is given in~\cite{DR03}). Thus, we can in this way obtain a PWL conductance-based PWL MMO model, which will open the way towards biophysically accurate and mathematically easier neuron models.
\section*{Appendix A}

In this appendix we give an sketch of the proof of the Proposition \ref{propcan}. This result deals with maximal canards, that are orbits connecting the slow manifolds  $S_{\varepsilon}^A$ and $S_{\varepsilon}^R$, in particular the ones that, after starting at $L_{\varepsilon}^A=S_{\varepsilon}^A\cap\{x=-\delta\}$, flow through the central zone to $L_{\varepsilon}^R=S_{\varepsilon}^R\cap\{x=\delta\}$ (see formulas~\eqref{attrepmandelta}). Therefore, we first present geometrical arguments revealing the parameter regions for which we can expect the existence of such orbits. 

From \eqref{attrepmandelta}, the intersection points of the straight lines $L_{\varepsilon}^A$ and $L_{\varepsilon}^R$ with the y-axis and the z-axis, that is $(0,y^*),\;(z^*_A,0)$ and $(0,y^*),\;(z^*_R,0)$, respectively satisfy that
\begin{eqnarray}\label{def:z*}
     y^*&=&-\delta p_1 \varepsilon -\left( p_2 p_3+\delta {p_1}^{2}\right) {\varepsilon}^{2} + O(\varepsilon^3),\nonumber \\
     z_A^*&=& \frac{\delta p_1}{p_2}+p_3 \varepsilon+p_1 p_3 {\varepsilon}^{2} + O(\varepsilon^3),\\
    z_R^*&=& -\frac{\delta p_1}{p_2}-p_3 \varepsilon-p_1 p_3 {\varepsilon}^{2} + O(\varepsilon^3).\nonumber
\end{eqnarray}
If we set $\delta= \pi \sqrt{\varepsilon}$, then the sign of $y^*$, for $\varepsilon$ small enough, depends only on the sign of $p_1$ whereas the sign of $z_A^*$ and $z_R^*$ depends on the sign of $p_1$ and $p_2$. Although the line $L_{\varepsilon}^A$ is contained in the switching plane $\{x=-\delta\}$ and $L_{\varepsilon}^R$ in $\{x=\delta\}$, in Figure \ref{fig:slow_str_lines} we represent the different positions of $L_{\varepsilon}^A$ and $L_{\varepsilon}^R$ in a common $(z,y)$-plane. The rows of Figure \ref{fig:slow_str_lines} correspond to the cases $p_2>0$ and $p_2<0$, the columns to the cases $p_1>0$ and $p_1<0$.

\begin{figure}[!t]
\centering
\includegraphics[width=9cm]{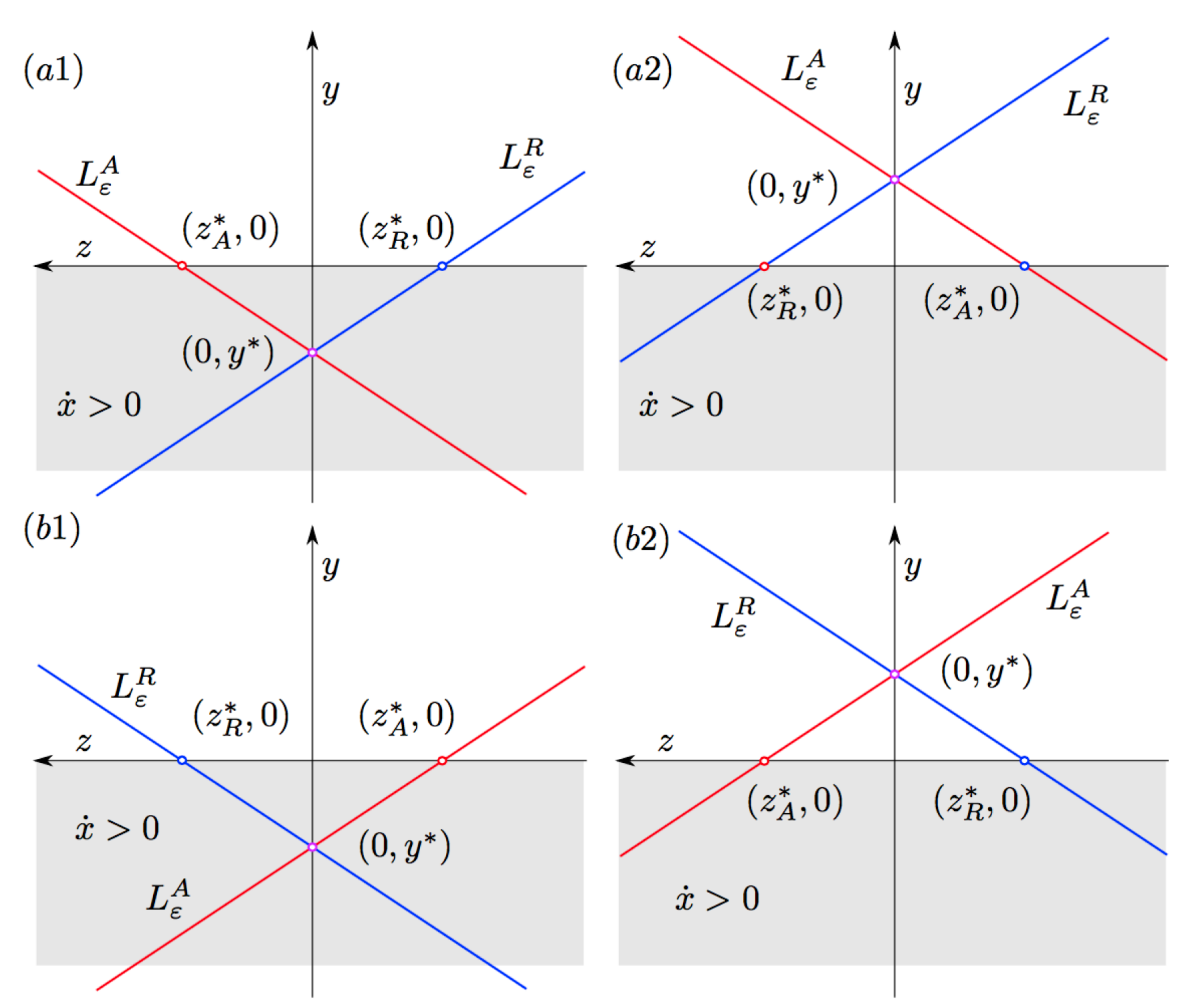}
\caption{Relative position of the straight lines $L_{\varepsilon}^A$ (in red) and $L_{\varepsilon}^R$ (in blue)  represented over a common zy-plane. The shaded half-plane indicates the boundary conditions for which orbits cross the central zone. Different panels correspond to different values of the sign of $p_1$ and $p_2$. The panels in the top row corresponds to the folded-saddle case ($p_2>0$): (a1) when $p_1>0$ and (a2) when $p_1<0$. In both cases one observes the possibility for the existence of orbits crossing through the central zone from $L_{\varepsilon}^A$ to $L_{\varepsilon}^R$. The panels in the bottom row correspond to the folded-node case ($p_2<0$). In panel (b1), when $p_1>0$, we observe that the existence of canards orbits is possible, however, when $p_1<0$, in panel (b2) we observe that the existence of canards orbits is impossible.}\label{fig:slow_str_lines}
\end{figure}

When $p_2>0$ and $p_1>0$ we obtain that $y^*<0$, $z_A^*>0$, $z_R^*<0$, and the straight lines $L_{\varepsilon}^A$ and $L_{\varepsilon}^R$ are located as the ones depicted in Figure \ref{fig:slow_str_lines}(a1). Orbits with initial conditions on the segment of $L_{\varepsilon}^A$ contained in the half-plane $\{x=-\delta, y<0\}$ (shadowed regions in Figure \ref{fig:slow_str_lines}(a1)) flow through the central zone by increasing the coordinate $z$ (since $p_3 > 0$) until they reach the half-plane $\{x=\delta, y<0\}$. We conclude that, in this case, orbits connecting $L_{\varepsilon}^A$ and $L_{\varepsilon}^R$ may exist, that is, in this case we can expect the existence of maximal canards (primary and/or secondary). In a similar way we conclude the possible existence of maximal canards orbits in any of the other cases, except when $p_1<0$ and $p_2<0$, see Figure \ref{fig:slow_str_lines}(b2). In this case, orbits connecting the segment of $L_{\varepsilon}^A$ and 
$L_{\varepsilon}^R$ in the shadowed region by increasing the $z$-coordinate cannot exist.

Once we have explored the parameter regions for which we can expect maximal canards, we deal with the explicit computation of such orbits. That is, we focus on solutions $\varphi(t;\mathbf{p})$ of system~\eqref{3d}-\eqref{pwlfunc} flowing through points $\mathbf{p}\in L_{\varepsilon}^A$ and satisfying that:
\begin{itemize}
 \item [i)] there exists a flight time $t(\mathbf{p})>0$ such that $\varphi(t(\mathbf{p});\mathbf{p})\in L_{\varepsilon}^R$; and,
 \item [ii)] for every $t\in (0,t(\mathbf{p}))$ we have that $|\varphi(t;\mathbf{p})\mathbf{e}_1^T | < \delta$ (where $\mathbf{e}_1^T$ is the transpose of the first vector in the canonical basis of $\mathbb{R}^3$).
\end{itemize} 

Since system~\eqref{3d}-\eqref{pwlfunc} is reversible with respect to the involution $$(x,y,z,t)\to (R(x,y,z),-t)$$ where $R(x,y,z)=(-x,y,-z)$, it follows that $R \varphi(t;\mathbf{p})=\varphi(-t;R \mathbf{p})$. Therefore, if $\gamma_{\mathbf{p}}$ is a maximal canard with flight time $t(\mathbf{p})$ between $L_{\varepsilon}^A$ and $L_{\varepsilon}^R$ and $\mathbf{q}=\varphi(t(\mathbf{p});\mathbf{p})$, then $\gamma_{R \mathbf{q}}$ is also a maximal canard with the same flight time. Under the assumption of uniqueness of maximal canards having equal flight time, it follows that $R \mathbf{q} = \mathbf{p}$, which proves the statement (a) of Proposition \ref{propcan}. 

As we have just proved, a maximal canard with initial condition $(-\delta,y,z) \in L_{\varepsilon}^A$ must also flow through the point $(\delta,y,-z) \in L_{\varepsilon}^R$. We conclude the following extra restrictions for the initial condition $(-\delta,y,z)$, see Figure~\ref{fig:slow_str_lines}:
\begin{equation}\label{eq:restriction}
\begin{tabular}{|l|l|}\hline
Sign of parameters  & restriction on $z$ \\ \hline
 $p_2>0$ and $p_1>0$ & $0>z$ \\ \hline
 $p_2>0$ and $p_1<0$ & $0>z_A^*>z$ \\ \hline
 $p_2<0$ and $p_1>0$ & $0>z \geq z^*_A$\\ \hline
\end{tabular}
\end{equation}
We note that in the three cases the $z$-coordinate is negative. 

Consider $p_1>0$. The solution $\varphi(t,\mathbf{p})$ of system~\eqref{3d}-\eqref{pwlfunc} through a point $\mathbf{p}=(-\delta,y,z)$ has coordinates equal to
\begin{align}\label{eq:flow_p1_p}
x(t)&=\dfrac{p_2 z- p_1 \delta}{p_1} \cos \left( \sqrt{\varepsilon p_1} t \right) 
    +\dfrac{\varepsilon p_2 p_3 - p_1 y}{p_1 \sqrt{\varepsilon p_1}} \sin \left( \sqrt{\varepsilon p_1} t \right)
    -\dfrac{p_2 z+ \varepsilon p_2 p_3 t }{p_1},\nonumber \\
y(t)&=\dfrac{\sqrt{\varepsilon p_1} (p_2z-p_1 \delta) }{p_1}  \sin \left( \sqrt{\varepsilon p_1} t \right)
    -\dfrac{\varepsilon p_2 p_3 - p_1 y_0}{p_1}  \cos \left( \sqrt{\varepsilon p_1} t \right) 
    +\dfrac{\varepsilon p_2 p_3}{p_1},\\
z(t)&=\varepsilon p_3 t +z.\nonumber    
\end{align}
Taking into account $\varphi(t(\mathbf{p});\mathbf{p}) =(\delta, y, -z)$, and isolating $t(\mathbf{p})$ from the third equation in \eqref{eq:flow_p1_p}, it  follows that
\begin{equation}\label{def:flyingtime}
t(\mathbf{p})=-\dfrac {2 z}{\varepsilon p_3}
\end{equation}
and hence the other two equations can be rewritten as
\begin{align}\label{eq:max_canards}
0 &=\dfrac{p_2 z- p_1 \delta}{p_1} \left( \cos \left( - \dfrac {2 \sqrt{p_1} }{p_3 \sqrt{\varepsilon}}z \right) +1  \right)
    +\dfrac{\varepsilon p_2 p_3 - p_1 y}{p_1 \sqrt{\varepsilon p_1}} \sin \left( - \dfrac {2 \sqrt{p_1} }{p_3 \sqrt{\varepsilon}}z \right),\nonumber\\
    &\\
0 &=\dfrac{\sqrt{\varepsilon p_1} (p_2z-p_1 \delta) }{p_1}  \sin \left( - \dfrac {2 \sqrt{p_1} }{p_3 \sqrt{\varepsilon}}z \right)
    -\dfrac{\varepsilon p_2 p_3 - p_1 y}{p_1} \left( \cos \left( - \dfrac {2 \sqrt{p_1} }{p_3 \sqrt{\varepsilon}}z \right) -1 \right).\nonumber
\end{align}
As we are interested in solutions $(z,y)$ of the previous system belonging to the straight line $L_{\varepsilon}^A$, from \eqref{attrepmandelta} we express $y$ as a function of $z$ and $\varepsilon p_2p_3 - p_1 y= \varepsilon \lambda_A^{-1} \left( p_1p_2 z - \delta p_1^2 - p_2p_3 \right)$, where $\lambda_A$ is the fast eigenvalue of the system in the half-space $x<-\delta$, see \eqref{attrslowm}. Since $H(-\delta,y,z)$ (from formula~\eqref{axis}) only vanishes at the weak canard (which does not belong to $L^A_{\varepsilon}$), we can conclude that
$$
  {\varepsilon}^2{\lambda_A}^{-2} \left( p_1p_2 z - \delta p_1^2 - p_2p_3 \right)^2+\varepsilon p_1(p_2 z - \delta p_1)^2 \neq 0
.$$Consequently, the trigonometric functions, in the system above, can be isolated as follows
\begin{align}\label{eq:solution_p1_p}
\cos \left( - \dfrac {2 \sqrt{p_1} }{p_3 \sqrt{\varepsilon}}z \right) &=
	  \dfrac{ \varepsilon (p_1 p_2 z-\delta p_1^2-p_2p_3)^2-\lambda_A^2 p_1(p_2 z - \delta p_1)^2 }
		{ \varepsilon (p_1 p_2 z-\delta p_1^2-p_2p_3)^2+\lambda_A^2 p_1(p_2 z - \delta p_1)^2 },\nonumber \\ & \\
\sin \left( - \dfrac {2 \sqrt{p_1}}{p_3 \sqrt{\varepsilon}}z \right) &=
	- \dfrac{2 \lambda_A \sqrt{\varepsilon p_1}(p_1p_2 z -\delta p_1^2-p_2p_3)(p_2 z - \delta p_1)}
		{ \varepsilon (p_1 p_2 z-\delta p_1^2-p_2p_3)^2+\lambda_A^2 p_1(p_2 z - \delta p_1)^2 }.\nonumber
\end{align}
Solutions of system \eqref{eq:solution_p1_p} are also solutions of equation 
\begin{equation}\label{eq:tag_sol_p1_p}
\tan \left( - \dfrac {2 \sqrt{p_1} }{p_3 \sqrt{\varepsilon}}z \right) =
		 2 |\lambda_A | \sqrt{\varepsilon p_1} \dfrac{(p_1p_2 z -\delta p_1^2-p_2p_3)(p_2 z - \delta p_1)}
		        { \varepsilon (p_1 p_2 z-\delta p_1^2-p_2p_3)^2-\lambda_A^2 p_1(p_2 z - \delta p_1)^2 }.
\end{equation}
On the contrary, there are solutions of equation \eqref{eq:tag_sol_p1_p} which are not solutions of system \eqref{eq:solution_p1_p}. By noting that only half of the solutions of equation \eqref{eq:tag_sol_p1_p} become solutions of system~\eqref{eq:solution_p1_p}, we focus on the number of solutions of equation \eqref{eq:tag_sol_p1_p}, which gives access to the number of canard solutions. 

Let us consider the rational function 
\[
 Q(z)= \dfrac{(p_1p_2 z -\delta p_1^2-p_2p_3)(p_2 z - \delta p_1)}
		        { \varepsilon (p_1 p_2 z-\delta p_1^2-p_2p_3)^2-\lambda_A^2 p_1(p_2 z - \delta p_1)^2 },
\]
which is the quotient of two polynomials of degree two, with opposite concavities both  depending on the sign of $p_1$.
The zeros of $Q(z)$ are 
\begin{eqnarray}\label{del:zeros}
 z_1=\delta \dfrac {p_1}{p_2}, \quad z_2=\delta \dfrac {p_1}{p_2} + \dfrac {p_3}{p_1}
\end{eqnarray}
the poles are 
\begin{eqnarray}\label{del:poles}
 \tilde{z}_1=\delta \dfrac {p_1}{p_2}+\dfrac {p_3}{\sqrt{p_1}}\sqrt{\varepsilon}+O(\varepsilon), \quad \tilde{z}_2=\delta \dfrac {p_1}{p_2}-\dfrac {p_3}{\sqrt{p_1}}\sqrt{\varepsilon}+O(\varepsilon),
\end{eqnarray}
and the asymptotic behaviour of $Q(z)$ when $z$ tends to $\pm\infty$ is $(\varepsilon p_1 - \lambda_A^2)^{-1} \approx -1$. The qualitative behaviour of the graph of $Q(z)$ together with the numerator and the denominator of $Q(z)$ is the depicted in Figure \ref{fig:graphQ}, depending on the sign of $p_2$.

\begin{figure}[!t]
 \begin{center}
 \includegraphics[width=10cm]{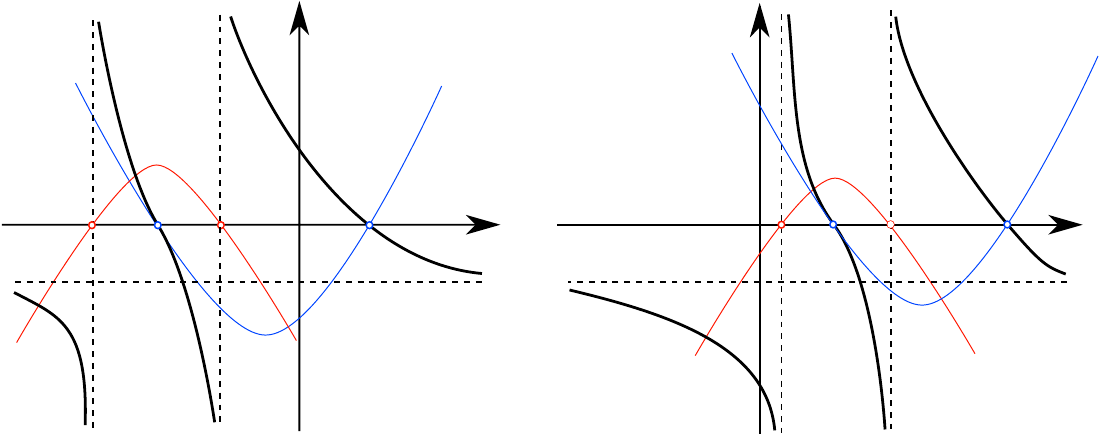}
 \put(-235,-15){(a)}
 \put(-169,59){$z$}
 \put(-263,113){$\tilde{z}_2$}
 \put(-252,49){$z_1$}
 \put(-195,60){$z_2$}
 \put(-231,113){$\tilde{z}_1$}
 \put(-77,-15){(b)}
 \put(-15,59){$z$}
 \put(-77,49){$z_1$}
 \put(-39,59){$z_2$}
 \put(-87,113){$\tilde{z}_2$}
 \put(-58,113){$\tilde{z}_1$}
 \caption{Graph of the numerator (blue), the denominator (red) and the rational function $Q(z)$ (black): (a) for $p_2<0$; (b) for $p_2>0$.}\label{fig:graphQ}
 \end{center}
\end{figure}

Furthermore, the zeros and the vertical asymptotes of the $\pi$-periodic function $\tan \left( - \frac {2 \sqrt{p_1} }{p_3 \sqrt{\varepsilon}}z \right)$ are 
\begin{equation}\label{rkrktilde}
r_k=-k \dfrac {p_3 \sqrt{\varepsilon}}{\sqrt{p_1}} \dfrac{\pi}{2}\quad \text{and} \quad  \tilde{r}_k = -\dfrac {2k+1}{2} \dfrac {p_3 \sqrt{\varepsilon}}{\sqrt{p_1}} \dfrac{\pi}{2}, \quad k=0,1,2,\ldots, 
\end{equation}
respectively.

Consider $p_2<0$. For $\varepsilon$ small enough it follows that $z_A^* < \tilde{z}_1$, see \eqref{def:z*} and \eqref{del:poles}. Since $z_A^*$ is a lower bound for the $z$-coordinate of initial condition of maximal canards, see table \eqref{eq:restriction}, we restrict ourselves to the zeros of \eqref{eq:tag_sol_p1_p} in $(\tilde{z}_1,0)$. The function $2 |\lambda_A| \sqrt{\varepsilon p_1} Q(z)$ is positive in $(\tilde{z}_1,0)$, see Figure \ref{fig:graphQ}(b). Therefore, in this interval, the zeros of \eqref{eq:tag_sol_p1_p} are contained each one into each of the following intervals $(\tilde{r}_k,r_k)$ with $k=0,1,\ldots,N,$ where $N$ is such that 
$\tilde{z}_1 \in \left( \tilde{r}_{N},\tilde{r}_{N-1} \right)$. Recall that only half of these zeros correspond to initial conditions of maximal canards, in particular the ones with $k$ odd. Note that the right-hand side of the first equation in  \eqref{eq:solution_p1_p} is negative in $(\tilde{z}_1,0)$. We conclude that there exist 
$[N/2]+1$ zeros of \eqref{eq:tag_sol_p1_p}. By using the fact that $\tilde{z}_1 \in \left( \tilde{r}_{N},\tilde{r}_{N-1} \right)$, together with formulas from~\eqref{del:poles} and~\eqref{rkrktilde}, we can show that, for $\varepsilon$ small enough,
\[
  -\dfrac {4+\pi}{4\pi} < \dfrac {N}{2} - \dfrac {p_1\sqrt{p_1}}{|p_2|p_3} = \dfrac {N}{2} - \mu < \dfrac {\pi-4}{4\pi}.
\]
Then, for every $k=0, 1, \ldots [\mu]$ there exists an orbit $\gamma_k$ through the initial condition $\mathbf{p}_k=(-\delta, y_k, z_k)$ with $z_k \in \left( \tilde{r}_{2k+1},r_{2k+1} \right)$. The approximations appearing in statement (b) of Proposition \ref{propcan} are obtained by using $z_k\approx {r}_{2k+1}$ and $\mathbf{p}_k \in L_{\varepsilon}^A$. In order to prove that $\gamma_k$ is a maximal canard, we need to show that the piece of $\gamma_k$ flowing from $\mathbf{p}_k$ to $R(\mathbf{p}_k)$ is fully contained in the central zone $\{|x|\leq \delta\}$. We conclude this by noting that $\gamma_k$ is contained in a cylinder, whose projection onto the $xz$-plane has amplitude equal to 
\begin{equation}\label{eq.amp.cyl}
\Delta=\delta \left(
		1+\dfrac {p_2 p_3}{p_1 \sqrt{p_1}}\left(k+\dfrac 1 2 \right)
	   \right),
\end{equation}
where $\Delta < \delta$, see Figure \ref{fig.max_true_canard_fn}(a).
Finally, the number of rotations of the maximal canard $\gamma_k$ around the weak canard $\gamma_w$ is 
\[
 2k\pi < -\dfrac{2\sqrt{p_1}}{p_3\sqrt{\varepsilon}}z_k  < 2k\pi + \dfrac {\pi} 2.
\]
In particular $\gamma_0$ is the unique maximal canard which runs less than a turn around $\gamma_w$. Then, $\gamma_0$ is the strong canard.  This concludes the proof of the statement (b) of Proposition \ref{propcan}.

If $p_2>0$, the graph of $2 |\lambda_A| \sqrt{\varepsilon p_1} Q(z)$ is qualitatively represented in Figure \ref{fig:graphQ}(a). Therefore, this function intersects with the graph of $\tan \left( - \frac {2 \sqrt{p_1} }{p_3 \sqrt{\varepsilon}}z \right)$ at infinitely many values $z_k \in \left(r_{k+1},\tilde{r}_k \right)$. In particular the ones with $k$ even. The expression of $z_k$ given in the proposition is obtained by taking $z_k\approx {r}_{2k+1}$. In order to prove that the orbit $\gamma_k$ through the initial condition $\mathbf{p}_k=(-\delta,y_k,z_k)$ corresponds with a maximal canard, we proceed as in the folded node case, that is, by analysing if $\gamma_k$ also flows through the point $R(\mathbf{p}_k)$.

From expression \eqref{eq.amp.cyl}, the amplitude $\Delta$ of the invariant cylinder containing $\gamma_k$ satisfies that $\Delta > \delta$. Since $\gamma_k$, with $k\geq 1$, rotates $k$ times around the faux canard $\gamma_f$, we conclude that $\gamma_k$ leaves the central zone before approaching $R(\mathbf{p}_k)$, see Figure  \ref{fig.max_true_canard_fn}(b). 
Therefore, only $\gamma_0$ becomes a maximal canard, namely the strong canard. 
This proves statement (c) of Proposition \ref{propcan}.
\begin{figure}[!t]
\centering
\includegraphics[width=9cm]{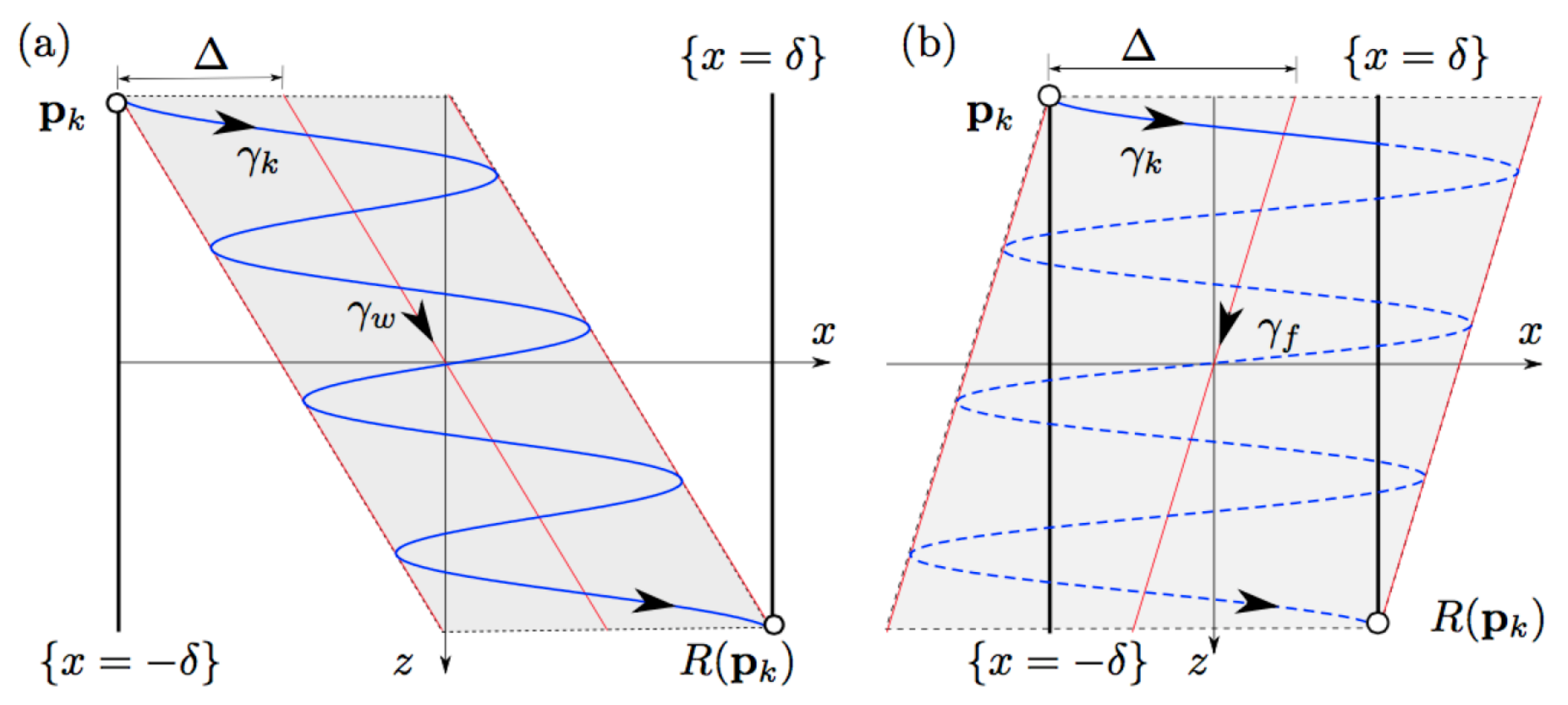}
\caption{\label{fig.max_true_canard_fn}(a) A secondary maximal canard $\gamma_k$ in the folded node configuration ($p_1>0, p_2<0,\ p_3>0$) flowing from the initial condition $\mathbf{p}_k$ in the switching plane $\{x=-\delta\}$ to the reversible point $R(\mathbf{p}_k)$ in the switching plane $\{x=\delta\}$. The grey region is the invariant cylinder given by \eqref{axis} with axis being the weak canard $\gamma_{w}$ and amplitude $\Delta$ given by \eqref{eq.amp.cyl}. (b) Orbit through $\mathbf{p}_k=(-\delta,y_,z_k)$ with $z_k$ a solution of \eqref{eq:tag_sol_p1_p} and $k\geq 1$. In the particular case depicted $k=3$. Notice that $\gamma_k$ leaves the central zone before reaching $R(\mathbf{p}_k)$, the dashed curve is not part of the orbit. The grey region corresponds with the invariant cylinder whose axis is the faux canard $\gamma_f$.}
\end{figure}

Consider now $p_1<0$, and suppose that $p_2>0$ (as we have seen above, if $p_2<0$ there are no maximal canards; see panel (b2) of Figure~\ref{fig:slow_str_lines}). The solution of system~\eqref{3d}-\eqref{pwlfunc} through a point $(x,y,z)$ has coordinates equal to
\begin{align*}
x(t)&= \frac{{e}^{\sqrt{\varepsilon \left| p_1\right| }\,t}\,\left( \sqrt{\varepsilon \left| p_1\right| }\,p_2\,z +\left| p_1\right| \,y-\sqrt{\varepsilon}\, 
	    {\left| p_1\right| }^{\frac{3}{2}}\,x+\varepsilon\,p_2\,p_3\right)}
	    {2\,\sqrt{\varepsilon \left| p_1\right| } |p_1| }\\
     &+ \frac{{e}^{-\sqrt{\varepsilon \left| p_1\right| }\,t}\,\left( -\sqrt{\varepsilon  \left| p_1\right| }\,p_2\,z+\left| p_1\right|\,y+\sqrt{\varepsilon}\,{\left| p_1\right| }^{\frac{3}{2}}\,x+\varepsilon\,p_2\,p_3\right) } {2\,\sqrt{\varepsilon \left| p_1\right| } |p_1| }
     +\frac{p_2\,z+\varepsilon\,p_2\,p_3\,t}{\left| p_1\right| }\\
y(t)&=-\frac{{e}^{\sqrt{\varepsilon \left| p_1\right| }\,t}\,\left( \sqrt{\varepsilon \left| p_1\right| }\,p_2\,z+\left| p_1\right|\,y-\sqrt{\varepsilon}\,{\left| p_1\right| }^{\frac{3}{2}}\,x+\varepsilon\,p_2\,p_3\right) }
	    {2\,\left| p_1\right| }\\
    &+\frac{{e}^{-\sqrt{\varepsilon \left| p_1\right| }\,t}\,\left( -\sqrt{\varepsilon \left| p_1\right| }\,p_2\,z+\left| p_1\right|\,y+\sqrt{\varepsilon}\,{\left| p_1\right| }^{\frac{3}{2}}\,x_0+\varepsilon\,p_2\,p_3\right) }
	    {2\,\left| p_1\right| }
    -\frac{\varepsilon\,p_2\,p_3}{\left| p_1\right| }\\    
z(t)&=\varepsilon p_3 t +z.    
\end{align*}
Proceeding as in the previous case, solutions flowing from $(-\delta,y,z)$ to $(\delta,y,-z)$ satisfy that 
\begin{align}\label{sinhcosh}
\nonumber \sinh \left( -\dfrac{2\sqrt{|p_1|}}{p_3 \sqrt{\varepsilon} } z \right) 
    &= - \dfrac {(\varepsilon p_2 p_3 +|p_1|y)^2-\varepsilon p_1 (p_2 z + |p_1| \delta)^2}
	         {(\varepsilon p_2 p_3 +|p_1|y)^2+\varepsilon p_1 (p_2 z + |p_1| \delta)^2}, \\ & \\
\nonumber \cosh \left( -\dfrac{2\sqrt{|p_1|}}{p_3 \sqrt{\varepsilon} } z \right) 
    &= - \dfrac {2 \sqrt{\varepsilon |p_1|}(\varepsilon p_2 p_3 +|p_1|y)(p_2 z + |p_1| \delta) }
	         {(\varepsilon p_2 p_3 +|p_1|y)^2+\varepsilon p_1 (p_2 z + |p_1| \delta)^2}.
\end{align}
Taking into account that $|p_1|= -p_1$ and  $(-\delta,y,z) \in L_{\varepsilon}^A$, that is, $\varepsilon p_2p_3 - p_1 y= \varepsilon \lambda_A^{-1} \left( p_1p_2 z - \delta p_1^2 - p_2p_3 \right)$, solutions of system~\eqref{sinhcosh} are also solutions of the equation
\begin{eqnarray}\label{eq:tagh_p}
 \tanh \left( -\dfrac{2\sqrt{|p_1|}}{p_3 \sqrt{\varepsilon} } z \right) = \dfrac {1}{2\sqrt{\varepsilon |p_1|} 
 |\lambda_A |} \dfrac {1}{Q(z)}.
\end{eqnarray}
Notice that the numerator of the rational function $Q(z)^{-1}$ is always positive and therefore has no zeros. The poles of $Q(z)^{-1}$ coincide with the zeros of $Q(z)$ given in \eqref{del:zeros}. Direct derivation of the function $Q(z)^{-1}$ gives a rational function where the numerator is a quadratic polynomial and the denominator does not vanish. Then, we obtain that this function has its extreme values at
\[
 m_1=\delta\dfrac {p_1}{p_2} + \dfrac {p_3}{\sqrt{|p_1|}} \sqrt{\varepsilon}, \quad  m_2=\delta\dfrac {p_1}{p_2} - \dfrac {p_3}{\sqrt{|p_1|}} \sqrt{\varepsilon},
\]
the former being a local maximum and the latter being a local minimum. Moreover, $Q(m_2)^{-1}=2\sqrt{\varepsilon|p_1|} |\lambda_A|$ and $Q(m_1)^{-1}=-2\sqrt{\varepsilon|p_1|} |\lambda_A|$. The qualitative behaviour of the graph of $Q(z)^{-1}$ is represented in Figure \ref{fig:maximal_canards2}. Since $0< \tanh\left( -\frac{2\sqrt{|p_1|}}{p_3 \sqrt{\varepsilon} } z \right)<1 $ in $(-\infty,0)$, we conclude that this function does not intersect with $Q(z)^{-1}$ in this interval, which proves the statement (d) of Proposition \ref{propcan}.

\begin{figure}[!t]
 \begin{center}
 \includegraphics{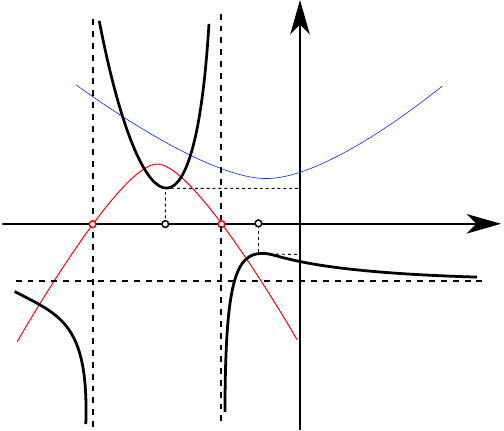}
 \put(-130,64){$z_2$}
 \put(-110,52){$m_2$}
 \put(-92,52){$z_1$}
 \put(-78,63){$m_1$}
 \put(-15,64){$z$}
 \caption{Graph of the numerator (blue), the denominator (red) and the rational function $Q(z)^{-1}$ (black) if $p_1<0$ and $p_2>0$. The values of the function at the extrema $m_2$ and $m_1$ are $\pm 2\sqrt{|p_1| \varepsilon} |\lambda_{\varepsilon}^A|$, respectively. }\label{fig:maximal_canards2}
 \end{center}
\end{figure}

\section*{Appendix B}
In this appendix we show a sketch of the proof of Proposition \ref{selected}.

As we have proved in the previous appendix, maximal canards through points $\mathbf{p}=(-\delta,y,z)$ contained in the switching manifold $\{x=-\delta\}$, are characterized by the solutions of equation \eqref{eq:max_canards} together with $\mathbf{p}\in L^A_{\varepsilon}$. A trivial solution of \eqref{eq:max_canards} is obtained by setting 
\begin{equation}\label{eq:max_canards_selec}
    \cos\left(\frac{-2z\sqrt{p_1}}{\sqrt{\eps}p_3}\right) = -1,\quad
    \sin\left(\frac{-2z\sqrt{p_1}}{\sqrt{\eps}p_3}\right) =  0,
\end{equation}
which implies 
\[
  y=y_k = \varepsilon \dfrac {p_2p_3}{p_1},\quad z=z_k=-(2k+1)\dfrac {\pi}{2} \dfrac{p_3}{p_1}\sqrt{\varepsilon p_1} \quad k\in \mathbb{Z}.
\]
Using the conditions~\eqref{eq:max_canards_selec} and the fact that the $z$-coordinate in the switching plane $\{x=-\delta\}$ has to be negative, we conclude that $\delta = \delta_k$ for $k\in \mathbb{N}$, where
\begin{eqnarray*}
\delta_k= -\frac{p_2p_3}{p_1^2}\left((2k+1)\frac{\pi}{2}\sqrt{\eps p_1}+1 \right). 
\end{eqnarray*}
Substituting the values of $y=y_k$, $z=z_k$ and $\delta=\delta_k$ in \eqref{eq:flow_p1_p} we obtain the explicit time parametrization of the strong canard for $k=0$ and of the $k$th secondary canard for $k\geq 1$%
\begin{eqnarray*}
x_{\mathrm{sc}}(t) &=& \frac{p_2p_3}{p_1^2}\cos(\sqrt{\eps p_1}t)-\sqrt{\eps p_1}\frac{p_2p_3}{p_1^2}\left(\sqrt{\eps p_1}t 
  - \left(k+\frac{1}{2}\right)\pi\right),\\
y_{\mathrm{sc}}(t) &=& -\sqrt{\eps p_1}\frac{p_2p_3}{p_1^2}\big(\sin(\sqrt{\eps p_1}t)-\sqrt{\eps p_1}\big)\\
z_{\mathrm{sc}}(t) &=& ~~\sqrt{\eps}p_3\left(\sqrt{\eps}t - \left(k+\frac{1}{2\sqrt{p_1}}\right)\pi\right) 
\end{eqnarray*}
for $0\leq t \leq t(\mathbf{p})=(2k+1)\pi \frac 1{\sqrt{\varepsilon p_1}}$, see \eqref{def:flyingtime}. This ends the proof of Proposition~\ref{selected}.

\bibliography{dgpprt}
\bibliographystyle{siam}

\end{document}